\newcommand{\ensemblenombre}[1]{\mathbb{#1}}
\newcommand{\N}{\ensemblenombre{N}}
\newcommand{\Z}{\ensemblenombre{Z}}
\newcommand{\R}{\ensemblenombre{R}}
\newcommand{\ab}{\mathrm{ab}}
\newcommand{\imb}{\mathrm{imb}}
\newcommand{\dist}{\mathrm{dist}}
\newcommand{\Fcal}{\mathcal{F}}
\newcommand{\Gcal}{\mathcal{G}}
\newcommand{\Hcal}{\mathcal{H}}
\newcommand{\Rcal}{\mathcal{R}}
\newcommand{\Afrak}{\mathfrak{A}}
\newtheoremstyle{perso}
   {4ex}
   {4ex}
   {\itshape}
   {-2ex}
   {\bfseries}
   {}
   {\newline}
   {}
\newtheoremstyle{perso2}
   {4ex}
   {4ex}
   {}
   {-2ex}
   {\bfseries}
   {}
   {\newline}
   {}
\newcounter{th}
\newtheorem{theorem}[th]{Theorem}
\newtheorem{lemma}{Lemma}
\newtheorem{corollary}{Corollary}
\newtheorem{proposition}{Proposition}
\newtheorem{remark}{Remark}
\newtheorem{definition}{Definition}
\newtheorem{theorembis}{Theorem}
\newtheorem{theoremter}{Theorem}
\newcounter{thfr}
\newtheorem{theoreme}[thfr]{Théorème}
\newtheorem{theoremebis}{Théorème}
\newtheorem{theoremeter}{Théorème}
\title{A Rauzy fractal unbounded in all directions of the plane}
\author{M\'elodie Andrieu}
\date{}
\begin{document}

\maketitle

\begin{abstract}
	We construct an Arnoux-Rauzy word for which the set of all differences of two abelianized factors is equal to $\Z^3$. In particular, the imbalance of this word is infinite - and its Rauzy fractal is unbounded in all directions of the plane.
\end{abstract}

\selectlanguage{french}

\begin{abstract}
	Nous construisons explicitement un mot d'Arnoux-Rauzy pour lequel l'ensemble des différences possibles des facteurs abélianisés est égal à $\Z^3$. En particulier, le déséquilibre de ce mot est infini, et son fractal de Rauzy n'est borné dans aucune direction du plan.   
\end{abstract}

%
%

\section{Introduction} \label{sect:introduction}

À l'algorithme de fraction continue soustractif décrit par l'itération de l'application (dite de Farey)
$$
\begin{array}{llll}
(\R^+)^2 & \rightarrow & (\R^+)^2 & \\
(x,y) & \mapsto & (x-y,y) & \qquad \text{si $x \geq y$,} \\
& & (x,y-x) & \qquad \text{sinon,}
\end{array}
$$
 est associée une classe particulière de mots infinis binaires appelés mots sturmiens. Rappelons qu'un \emph{mot} est une suite finie ou infinie d'éléments (\emph{lettres}) pris dans un ensemble fini (\emph{alphabet}). Les mots sturmiens jouissent de nombreuses caractérisations combinatoires, arithmétiques et géométriques (consulter \cite{Loth} pour une introduction générale). En particulier, ce sont exactement les mots apériodiques binaires dont le déséquilibre vaut 1, c'est-à-dire dans lesquels 
 tous les facteurs de même longueur (un \emph{facteur} de longueur $n$  est un sous-mot constitué de $n$ lettres consécutives) contiennent, à 1 près, le même nombre de 0 (et donc, à 1 près également, le même nombre de 1). Par exemple, un mot commençant par $w=001000100100010001001…$ pourrait être sturmien, tandis qu'un mot commençant par $w=011011100...$ ne l'est pas, car il contient les facteurs $11$ et $00$.
 Cette propriété garantit en particulier que les lettres $0$ et $1$ sont uniformément distribuées par rapport à une mesure de probabilité $\nu$ sur $\{0,1\}$, et que l'écart entre la somme de Birkhoff $1/N\sum_{n=0}^{N-1} 1\!\!1_{\{0\}}(w[n])$, qui mesure la fréquence de $0$ observée parmi les $N$ premières lettres du mot $w$, et sa valeur attendue $\nu(0)$ (appelée fréquence de $0$) est majoré par $1/N$. D'un point de vue géométrique, cela signifie que les points  $P_N:=\sum_{n=0}^{N}e_{w[n]}$, où $(e_0, e_1)$ désigne la base canonique de $\R^2$, restent à une distance bornée de la droite portée par le vecteur fréquence $(\nu(0),\nu(1))$. On appelle \emph{ligne brisée} associée à $w$ la suite $(P_N)_{N\in \N}$.
 En informatique, les lignes brisées associées aux mots sturmiens sont utilisées pour discrétiser les droites de pentes irrationnelles.
 
 Depuis Jacobi, plusieurs algorithmes ont été proposés pour généraliser les fractions continues à des triplets de réels positifs (on peut consulter à ce sujet le livre \cite{Schw00}). De tels algorithmes devraient permettre d'approcher simultanément et efficacement deux réels par une suite de couples de nombres rationnels.

 Dans ce document, nous nous intéressons aux mots d'Arnoux-Rauzy, introduits par Arnoux et Rauzy dans \cite{AR91}, qui sont les mots ternaires associés à l'algorithme (défini sur un ensemble de mesure nulle) : 
 $$ \label{FAR}
 \begin{array}{lllll}
 F_{AR} : \;&(\R^+)^3 & \rightarrow & (\R^+)^3 & \\
 &(x,y,z) & \mapsto & (x-y-z,y,z) & \qquad \text{si $x \geq y+z$,} \\
 && & (x,y-x-z,z) & \qquad \text{si $y \geq x+z$,}\\
 & & & (x,y,z-x-y) & \qquad \text{si $z \geq x+y$.}
 \end{array}
 $$
 
 Parce qu'ils conservent de nombreuses propriétés combinatoires des mots sturmiens, les mots d'Arnoux-Rauzy sont souvent présentés comme leur généralisation. En particulier, on peut montrer qu'ils admettent un vecteur fréquence des lettres.
 Aussi, une façon d'étudier la ligne brisée (tridimensionnelle) associée à un mot d'Arnoux-Rauzy consiste à la projeter, parallèlement au vecteur fréquence, sur le plan diagonal $\Delta_0: x+y+z=0$. On appelle \emph{fractal de Rauzy de $w$} l'adhérence de cet ensemble de points. 
 
 Jusqu'en 2000, on a pensé que, comme pour les mots sturmiens, le déséquilibre des mots d'Arnoux-Rauzy était borné, ou au moins fini. Cassaigne, Ferenczi et Zamboni \cite{CFZ01} ont contredit cette conjecture en construisant un mot d'Arnoux-Rauzy de déséquilibre infini - un mot donc, dont la ligne brisée s'écarte régulièrement et de plus en plus loin de sa direction moyenne, ou, dit encore autrement, un mot dont le fractal de Rauzy n'est pas borné.
 
 Aujourd’hui, on ne sait presque rien sur les propriétés géométriques et topologiques de ces fractals de Rauzy déséquilibrés. Le théorème d'Oseledets \cite{Ose68} suggère toutefois que ces fractals sont contenus dans une bande du plan ; en effet, si les exposants de Lyapounov associés au produit de matrices donné par l'algorithme existent, l'un de ces exposants au moins doit être négatif puisque leur somme est nulle.

Dans cette note, nous prouvons que cette intuition est fausse.
 
 \begin{theoreme}\label{th:main_french}
 	Il existe un mot d'Arnoux-Rauzy dont le fractal de Rauzy n'est borné dans aucune direction du plan.
 \end{theoreme}
 
 La construction que nous présentons s'adapte à la classe des mots associée à l'algorithme de fraction continue multidimensionnelle de Cassaigne-Selmer, introduite dans \cite{CLL17}, ainsi qu'aux mots épisturmiens stricts, qui sont la généralisation des mots d'Arnoux-Rauzy. Rappelons qu'un mot sur un alphabet contenant $d$ lettres est \emph{épisturmien strict} si son langage est clos par miroir et s'il admet, pour chaque longueur $n$, un unique facteur multi-prolongeable à droite, et si ce facteur peut de plus être prolongé par chacune des $d$ lettres de l'alphabet.

 \begin{theoremebis}\label{th:Cadic_french}
 	Il existe un mot C-adique $w_{\infty}$ dont le fractal de Rauzy n'est borné dans aucune direction du plan.
 \end{theoremebis}
 
 \begin{theoremeter}\label{th:episturmian_french}
 	Soit $d \geq 3$. Il existe un mot episturmien strict $w_{\infty}$ sur l'alphabet $\{1,...,d\}$ tel que pour tout hyperplan $\Hcal$ de $\R^d$, la distance des points de la ligne brisée  $(\ab(p_n(w_{\infty})))_{n \in \N}$ à l'hyperplan $\Hcal$ n'est pas bornée.
 \end{theoremeter}
 
Les démonstrations des Théorèmes \ref{th:Cadic_french} and \ref{th:episturmian_french} reposent sur des techniques similaires à celles du Théorème \ref{th:main_french}, et sont intégralement rédigées dans \cite{And20}.

 Par ailleurs, nous proposons une preuve élémentaire du :
 
 \begin{theoreme}\label{th:frequencies_french}
Le vecteur fréquence des lettres d'un mot d'Arnoux-Rauzy a des coordonnées rationnellement indépendantes.
 \end{theoreme} 
 Ce résultat, conjecturé par Arnoux et Starosta en 2013 \cite{AS13}, a été démontré très récemment par des moyens plus sophistiqués par Dynnikov, Hubert et Skripchenko \cite{DPS}.

   Le Théorème \ref{th:frequencies_french}  est en fait vrai en toute dimension (voir \cite{And20} pour une preuve complète):
   
   \begin{theoremebis}
   	Soit $d \geq 2$. Le vecteur fréquence des lettres d'un mot épisturmien strict sur l'alphabet $\{1,...,d\}$ a des coordonnées rationnellement indépendantes.
   \end{theoremebis}
   
 %
 %

 \selectlanguage{english}
 \section{Introduction (short English version)}
 
 Until 2000, it was believed that, as for Sturmian words, the imbalance of Arnoux-Rauzy words was bounded - or at least finite. Cassaigne, Ferenczi and Zamboni disproved this conjecture by constructing an Arnoux-Rauzy word with infinite imbalance, i.e. a word whose broken line deviates regularly and further and further from its average direction \cite{CFZ01}. Today, we know virtually nothing about the geometrical and topological properties of these unbalanced Rauzy fractals. The Oseledets theorem suggests that these fractals are contained in a strip of the plane: indeed, if the Lyapunov exponents of the matricial product associated with the word exist, one of these exponents at least is nonpositive since their sum equals zero. This article aims at disproving this belief.
 
 \begin{theorem}\label{th:th1}
 	There exists an Arnoux-Rauzy word whose Rauzy fractal is unbounded in all directions of the plane.
 \end{theorem}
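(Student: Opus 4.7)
The plan is to construct $w_\infty$ as the $S$-adic limit $\lim_{n\to\infty}\sigma_{d_1}\cdots\sigma_{d_n}(1)$ of the three Arnoux-Rauzy substitutions $\sigma_1,\sigma_2,\sigma_3$, with the directive sequence $(d_n)\in\{1,2,3\}^{\N}$ built inductively to enforce the stronger conclusion announced in the abstract: the set of abelianized factor differences of $w_\infty$ is equal to $\Z^3$. Given this, the theorem follows quickly. Indeed, by Theorem~\ref{th:frequencies_french} the coordinates of the frequency vector $f$ are rationally independent, so the parallel projection $\pi_f(\Z^3)$ is dense in $\Delta_0$, and the Rauzy fractal---a closed subset of $\Delta_0$ whose difference set contains this dense image---cannot be bounded in any direction of the plane.

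To construct $(d_n)$, I would proceed in blocks. Fix an enumeration $\Z^3=\{v_k\}_{k\ge 1}$. The persistence fact I would use is that any factor $u$ of an intermediate word $\sigma_{d_1}\cdots\sigma_{d_N}(1)$ is automatically a factor of every longer intermediate word, and hence of the limit $w_\infty$, with its abelianization preserved. Starting from a prefix $d_1\cdots d_{N_k}$, I would append a finite block $B_{k+1}$ chosen so that some pair of factors of the level-$N_{k+1}$ word has abelianized difference exactly $v_{k+1}$, inserting extra occurrences of the currently least-used letter to guarantee that each of $1,2,3$ appears infinitely often in $(d_n)$. Iterating, $w_\infty$ realizes every $v_k$ as an abelianized factor difference.

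The core step is a realizability lemma: for any finite directive prefix and any target $v\in\Z^3$, one can extend the prefix by a finite block so that $v$ appears as a difference of two abelianized factors at the new level. My approach would be to identify, at a suitable intermediate level, a small ``seed'' difference vector $v_0$ arising naturally among short factors---the bispecial factors of Arnoux-Rauzy words, which are palindromes obtained by iterated palindromic closure, provide an explicit and controlled supply of such pairs---and then to choose subsequent letters so that $v=M\cdot v_0$, where $M$ is the product of Arnoux-Rauzy incidence matrices corresponding to the appended block. This reduces the lemma to a statement about the multiplicative monoid generated by the three Arnoux-Rauzy incidence matrices: that by freely choosing at which level $v_0$ is introduced and which seed one uses, every lattice vector lies in such an orbit.

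The principal obstacle is precisely this realizability step. The Arnoux-Rauzy incidence matrices are specific $3\times 3$ integer matrices and the monoid they generate is a proper subset of $SL_3(\Z)$, so covering all of $\Z^3$ requires combining their algebraic structure with the combinatorics of factors (especially the palindromic bispecial factors) in a careful way. A secondary technical point is that the block construction has to be coordinated so that the Arnoux-Rauzy property is preserved through all stages of the induction---each letter appearing infinitely often in $(d_n)$---so that $w_\infty$ is genuinely Arnoux-Rauzy rather than degenerating to an eventually periodic directive sequence.
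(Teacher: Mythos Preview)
Your global architecture matches the paper's: build the directive sequence in blocks, enumerate $\Z^3$, and at each stage append a finite block so that the new target vector is realized as a difference of abelianized factors. The paper also inserts $\sigma_1\sigma_2\sigma_3$ between blocks to keep the word Arnoux--Rauzy, just as you propose.

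The genuine gap is exactly where you locate it: the realizability step. Your proposed mechanism --- pick a seed difference $v_0$ among short factors and then hit the target via $v=v_0M$ with $M$ a product of Arnoux--Rauzy incidence matrices --- is not shown to work, and in fact the pure matrix action is too rigid. Right multiplication by $M_{\sigma_i}$ sends $(a,b,c)$ to the vector obtained by replacing the $i$-th entry by $a+b+c$; this map fixes vectors such as $(1,1,-1)$, so orbits under the monoid do not obviously sweep out $\Z^3$ from any finite supply of seeds. The paper resolves this by exploiting a combinatorial degree of freedom you do not use: when one applies $\sigma_i$ to a factor $u$, the resulting word $\sigma_i(u)$ sits inside the ambient image with the letter $i$ immediately before and after it, so one may add or delete an $i$ at either boundary and still have a factor. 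This yields, for each $i$, not just the linear map $\tau_{i,0}$ but a family $\tau_{i,\delta}$ with $\delta\in\{-2,-1,0,1,2\}$ (the affine shift $\delta$ in the $i$-th coordinate). The paper then shows, by an explicit graph argument, that every vector of $\Z^3$ is reachable from $(0,0,0)$ through these $15$ maps; the steps with $\delta\neq 0$ are essential (they appear already in the anchor Lemma producing $(a,-a,-a)$). Your bispecial/palindromic seeds do not substitute for this.

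Two smaller points. First, for the ``with prefix'' version you plan to redo realizability at each level; the paper instead observes that the incidence matrix $M_p$ of the current prefix lies in $GL_3(\Z)$, applies the basic lemma to $(a,b,c)M_p^{-1}$, and then pushes the resulting factors forward through $p$ --- this reduces the prefixed case to the unprefixed one in one line. Second, your passage from ``all of $\Z^3$ occurs as a difference'' to ``unbounded in every direction'' is slightly off and unnecessarily heavy. Abelianized \emph{factors} are differences of two broken-line points, so an abelianized factor \emph{difference} lies in $(R-R)-(R-R)$, not in $R-R$ as you write; the conclusion still follows, but you do not need rational independence of the frequencies at all. The paper argues directly (its Proposition~\ref{prop:geom}): given any plane $\Pi$, pick $\mathbf{d}\in\Z^3\cap\Delta_0$ far from $\Pi$, realize $\mathbf{d}=\ab(u)-\ab(v)$, and use the triangle inequality on the prefixes $t$ and $tu$ to force some abelianized prefix far from $\Pi$.
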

 
 Theorem \ref{th:th1} also holds, on one hand, for C-adic words, which are the infinite words over $\{1,2,3\}$ associated with the Cassaigne-Selmer multidimensional continued fraction algorithm introduced in \cite{CLL17} and, on the other hand, for strict episturmian words, which are the generalization of Arnoux-Rauzy words. We recall that a strict episturmian word is a word whose language is close by mirror and which admits, for each length, a unique right-special factor -which is moreover prolonged by each letter in the alphabet.
  
  \begin{theorembis}\label{th:Cadic}
  	There exists $w_{\infty}$ a C-adic word whose Rauzy fractal is unbounded is all directions of the plane.
  \end{theorembis}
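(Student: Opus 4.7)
The plan is to transpose the construction of Theorem \ref{th:th1} from the Arnoux--Rauzy setting to the C-adic one. Both families of words share a common S-adic framework: an infinite directive sequence of substitutions $(\sigma_n)_{n \geq 1}$, whose abelianization matrices $M_{\sigma_n}$ form a three-dimensional unimodular continued fraction algorithm, determines a limit word $w_\infty$ whose Rauzy fractal is the projection of the associated broken line onto the diagonal plane $\Delta_0:x+y+z=0$. Under mild primitivity assumptions the C-adic word admits a frequency vector, and the study of the broken line reduces to controlling the cumulative products $M_{\sigma_1}\cdots M_{\sigma_n}$ restricted to $\Delta_0$.

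Concretely, I would first identify a finite family of blocks of Cassaigne--Selmer substitutions whose matrices, restricted to $\Delta_0$, play the role that the three Arnoux--Rauzy substitutions play in the proof of Theorem \ref{th:th1}: each block, iterated many times, should contract $\Delta_0$ along a well-identified direction $v_i$, and the three resulting directions $v_1,v_2,v_3$ should be pairwise linearly independent as elements of $\Delta_0$. I would then concatenate such blocks, iterating each one a number of times that grows rapidly enough to produce, in the broken line, a deviation in the direction transverse to the corresponding $v_i$. The combinatorial mechanism underlying Theorem \ref{th:th1}, in which cycling through three directions with sufficient growth produces an abelianization unbounded in every direction of the plane, then transfers to $w_\infty$.

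The main obstacle is that, unlike the three Arnoux--Rauzy substitutions, the Cassaigne--Selmer substitutions do not enjoy a symmetric role under permutations of the letters. The contracting directions of the candidate blocks must therefore be computed explicitly, and the quantitative analogues of the estimates of Theorem \ref{th:th1}---uniform bounds on the projected norms of the cumulative matrices, together with lower bounds on the deviations produced by long blocks---must be re-established for these particular blocks. Primitivity of the final directive sequence must also be checked, so that $w_\infty$ is well defined and admits a frequency vector in the interior of the positive cone.

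Once the three blocks and the corresponding spectral data are in place, the construction of the directive sequence and the verification that the resulting Rauzy fractal is unbounded in every direction of the plane follow by the same arguments as in the Arnoux--Rauzy case, which is why the author can reasonably defer the full details to \cite{And20}.
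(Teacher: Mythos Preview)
The paper does not prove this theorem; it only states that the proof uses ``techniques similar to those of Theorem~\ref{th:main}'' and defers the details to \cite{And20}. So the comparison must be made with the method actually used for Theorem~\ref{th:main}.

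Your proposal does not match that method. You describe the mechanism of Theorem~\ref{th:main} as a spectral one: finding blocks whose matrices contract $\Delta_0$ along identified directions, iterating them with growing repetition, and cycling through three transverse directions. That is not what the paper does. The proof of Theorem~\ref{th:main} is purely combinatorial: Lemma~\ref{lemma:families} (via the accessibility graph on $\Z^3$ built from the maps $\tau_{i,\delta}$) shows that \emph{every} vector in $\Z^3$ arises as $\ab(u)-\ab(v)$ for factors $u,v$ of some $s(1)$; Lemma~\ref{lemma:prefix} upgrades this to arbitrary prefixes using that the incidence matrices lie in $GL_3(\Z)$; Proposition~\ref{prop:w_star} concatenates the resulting finite directive blocks; and Proposition~\ref{prop:geom} converts ``every integer difference is realised'' into ``unbounded in every direction''. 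No contracting eigendirections, no Lyapunov-type estimates, no growth rates of iterated blocks enter the argument. The paper's own Remark after the proof of Lemma~\ref{lemma:families} confirms that the C-adic case in \cite{And20} proceeds the same way, via a more general \emph{imbalance automaton} replacing the graph $\Gcal$.

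Your spectral sketch is therefore a genuinely different route. It is not obviously wrong, but as written it is a programme rather than a proof: you would have to exhibit the blocks, compute their restricted actions on $\Delta_0$, prove the claimed lower bounds on deviations, and check that these deviations survive the subsequent concatenations---none of which is automatic, and the lack of letter-permutation symmetry in the Cassaigne--Selmer substitutions that you yourself flag makes this nontrivial. By contrast, the paper's combinatorial approach sidesteps all spectral analysis: once the analogue of Lemma~\ref{lemma:families} is established for the Cassaigne--Selmer substitutions (which is what the imbalance automaton of \cite{And20} provides), Lemmas~\ref{lemma:prefix}, Proposition~\ref{prop:w_star} and Proposition~\ref{prop:geom} carry over verbatim since they use only that the incidence matrices are unimodular.
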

  
 \begin{theoremter}\label{th:episturmian}
 	Let $d \geq 3$. There exists $w_{\infty}$ a strict episturmian word  over the alphabet $\{1,...,d\}$ such that for any hyperplane $\Hcal$ in $\R^d$, the distance between $\Hcal$ and the broken line $(\ab(p_n(w_{\infty})))_{n \in \N}$ is unbounded. 
 \end{theoremter}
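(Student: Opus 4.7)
The plan is to adapt the construction underlying Theorem~\ref{th:th1} (the case $d=3$) to an arbitrary alphabet $\{1,\dots,d\}$ with $d\geq 3$, using the episturmian morphisms $\psi_i$ (defined by $\psi_i(i)=i$ and $\psi_i(j)=ij$ for $j\neq i$) whose infinite compositions generate strict episturmian words. A strict episturmian word $w_\infty$ corresponds to a directive sequence $(i_n)_{n\geq 1}\in\{1,\dots,d\}^{\N}$ in which every letter appears infinitely often. The incidence matrices $M_{i_n}$ of the $\psi_{i_n}$ govern the abelianization of prefixes, and the goal is to choose $(i_n)$ so that the broken line $(\mathrm{ab}(p_n(w_\infty)))_{n\in\N}$ has arbitrarily large distance from every hyperplane $\Hcal\subset\R^d$.

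First, I would isolate the following combinatorial/geometric lemma, whose analogue for $d=3$ is the core of the proof of Theorem~\ref{th:th1}: given a current partial directive sequence $i_1,\dots,i_n$ and any affine hyperplane $\Hcal$ of $\R^d$ not containing the frequency direction of the induced word, one can extend by a finite block $i_{n+1},\dots,i_{n+N}$ such that some prefix of the resulting word has abelianization at distance $\geq K$ from $\Hcal$, for any prescribed $K>0$. Concretely, I would reduce to the three-letter case by restricting attention to the subalphabet $\{a,b,c\}\subset\{1,\dots,d\}$ for which $\Hcal$ still intersects the corresponding coordinate subspace transversely to the frequency direction, and then reuse the block of AR-morphisms exhibited for Theorem~\ref{th:th1} — possibly preceded and followed by a bounded amount of ``bookkeeping'' letters $\psi_k$ with $k\notin\{a,b,c\}$ to preserve the hypotheses and keep the construction strictly episturmian.

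Next, I would run a diagonal argument over a countable dense family $(\Hcal_k)_{k\in\N}$ of rational hyperplanes in the Grassmannian of hyperplanes in $\R^d$. Starting from $i_1,\dots$, I would alternately apply the lemma to achieve distance $\geq k$ from $\Hcal_k$ while also reinforcing the escapes already obtained for $\Hcal_1,\dots,\Hcal_{k-1}$; between these ``escape blocks'' I would insert any missing letters to ensure that each letter of $\{1,\dots,d\}$ appears infinitely often in $(i_n)$, thereby guaranteeing that $w_\infty$ is strict episturmian. A continuity/compactness argument then upgrades unboundedness along the dense family to unboundedness along every hyperplane $\Hcal$: the distance from $\mathrm{ab}(p_n(w_\infty))$ to $\Hcal$ depends continuously on $\Hcal$ in the Grassmannian, and the $\limsup$ of this distance in $n$ is lower semicontinuous, hence equal to $+\infty$ on a dense $G_\delta$ — and, by choosing the enumeration to exhaust the Grassmannian, on all of it.

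The main obstacle I anticipate is the lemma in the first paragraph: even though the $d=3$ construction produces arbitrarily large excursions transverse to one prescribed direction, when embedded in $\R^d$ one must check that these excursions do \emph{not} destroy the escapes previously obtained for other hyperplanes, and that they remain compatible with being episturmian over the full alphabet. This requires a careful quantitative control on the norms and contracting directions of the products $M_{i_1}\cdots M_{i_n}$ — essentially a hands-on substitute for the Oseledets decomposition — of the sort worked out in \cite{And20}.
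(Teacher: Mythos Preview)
The paper does not give the proof of Theorem~\ref{th:episturmian} here (it defers to \cite{And20}) but states that the argument parallels that of Theorem~\ref{th:main}. That argument is structurally different from yours: rather than escape hyperplanes one by one, it builds $w_\infty$ so that \emph{every} vector of $\Z^d$ arises as $\ab(u)-\ab(v)$ for some factors $u,v$ (the $d$-letter analogues of Lemma~\ref{lemma:families}, Lemma~\ref{lemma:prefix} and Proposition~\ref{prop:w_star}), and then a short pigeonhole computation (Proposition~\ref{prop:geom}) shows that this single combinatorial property forces unboundedness from \emph{every} hyperplane simultaneously. No density or continuity is invoked, and the ``compatibility'' issue you worry about simply does not arise, since a factor of a prefix remains a factor of the limit word forever.

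Your plan has a genuine gap in the passage from a countable dense family of hyperplanes to all hyperplanes. The map $\Hcal\mapsto \sup_n \dist(\ab(p_n(w_\infty)),\Hcal)$ is indeed lower semicontinuous, so its $+\infty$-set is a $G_\delta$; but a dense $G_\delta$ need not be the whole Grassmannian (on $[0,1]$, set $g(p/q)=q$ for rationals in lowest terms and $g=+\infty$ on irrationals: this is lower semicontinuous, equals $+\infty$ on a dense $G_\delta$, yet is finite on a dense set), and you cannot ``exhaust'' an uncountable Grassmannian by a countable enumeration. There is also a circularity in your first lemma: the hyperplanes that genuinely require work are those containing the frequency direction $f_{w_\infty}$, but $f_{w_\infty}$ is determined only once the full directive sequence is fixed, so you cannot position your dense family relative to it in advance. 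Both issues vanish if you adopt the paper's scheme: prove the $\Z^d$ version of Lemma~\ref{lemma:families} (the graph argument of Section~\ref{sect:proof} extends, with $\tau_{i,\delta}$ now indexed by $i\in\{1,\dots,d\}$), and then apply the direct argument of Proposition~\ref{prop:geom}, whose proof goes through verbatim with ``plane'' replaced by ``hyperplane''.
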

 
 The proofs of Theorems \ref{th:Cadic} and \ref{th:episturmian} are based on techniques similar to those of Theorem \ref{th:main}; they can be found in \cite{And20}.

 Besides, we propose an elementary proof of:
 \begin{theorem}\label{th:th2}
 	The vector of letter frequencies of any Arnoux-Rauzy word has rationally independent entries.
 \end{theorem}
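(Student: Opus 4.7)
We argue by contradiction: we assume the existence of a nontrivial integer relation $a_1 f_1 + a_2 f_2 + a_3 f_3 = 0$ among the letter frequencies, and show that it forces the directive sequence of the Arnoux-Rauzy word $w$ to eventually avoid one of the three letters, contradicting the defining property that each of the substitutions $\sigma_1, \sigma_2, \sigma_3$ appears infinitely often in the S-adic decomposition of $w$.

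Let $(i_n)_{n \geq 1}$ be the directive sequence of $w$, $f$ its frequency vector, $M_i$ the incidence matrix of $\sigma_i$, and $A_n := M_{i_1} \cdots M_{i_n}$. Take $a \in \Z^3 \setminus \{0\}$ with $a \cdot f = 0$ and set $v_n := A_n^T a$. The iterated desubstitution $w = \sigma_{i_1} \circ \cdots \circ \sigma_{i_n}(w^{(n)})$, in which each $w^{(n)}$ is an Arnoux-Rauzy word with frequency vector $f^{(n)}$ satisfying $A_n f^{(n)} \propto f$, together with the easy fact that Arnoux-Rauzy words have strictly positive letter frequencies, yields $v_n \cdot f^{(n)} = 0$ with $f^{(n)}$ in the open positive octant and $v_n \neq 0$ (since $A_n$ is invertible). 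Every $v_n$ must therefore carry at least one strictly positive and one strictly negative coordinate; we call such an integer vector \emph{admissible}.

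The central step will be a monotonicity lemma on the $\ell^1$-norm. Since $M_i^T$ fixes the $i$-th coordinate of $v$ and adds $v_i$ to each of the two others, a direct sign-by-sign inspection shows that whenever $v$ and $M_i^T v$ are both admissible one has $\|M_i^T v\|_1 \leq \|v\|_1$, with a strict decrease of at least $2$ exactly when $v_i$ carries the \emph{minority sign} of $v$ (i.e.\ the two other entries of $v$ are both strictly of the opposite sign). The positive-integer sequence $(\|v_n\|_1)$ is thus non-increasing, hence eventually constant: $\|v_n\|_1 = \Phi_\infty$ for all $n \geq N$. From rank $N$ onwards, $i_{n+1}$ is never the minority-sign index of $v_n$. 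A short case analysis then concludes: an admissible vector $v$ whose sign pattern is $(\pm, \pm, \mp)$ up to permutation (two entries of one sign and one of the opposite) is sent by each of its two norm-preserving admissible moves to a vector of the same pattern; an admissible vector $v$ with a zero coordinate is either fixed by the move on the zero coordinate or sent by exactly one of the other two moves to a pattern of the first type. Consequently the orbit $(v_n)_{n \geq N}$ is eventually trapped in a single sign pattern of the first type, and the labels $i_{n+1}$ for large $n$ lie in a fixed two-element subset of $\{1,2,3\}$, contradicting the Arnoux-Rauzy hypothesis.

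The main obstacle is the sign-pattern bookkeeping that underpins the norm-control lemma and the subsequent classification of norm-preserving admissible transitions. The trickiest subcases are those in which $v$ has a zero entry, because admissibility of $M_i^T v$ then rests on strict inequalities between the nonzero entries of $v$.
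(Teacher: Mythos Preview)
Your proof is correct and follows the same overall architecture as the paper's: assume a nonzero integer vector $l_0$ orthogonal to the frequency vector, push it forward along the directive sequence to obtain integer vectors $l_m$ orthogonal to the (strictly positive) frequency vectors of the desubstituted words, and derive a contradiction from a non-increasing nonnegative integer invariant attached to the $l_m$. The difference lies only in the choice of invariant and in how the contradiction is extracted. The paper tracks the \emph{spread} $D_m=\max(l_m)-\min(l_m)$ and shows directly that $D_m>D_{m+2}$ whenever $s_m\neq s_{m+1}$, which happens infinitely often; this yields an infinite strictly decreasing sequence of nonnegative integers with no further case analysis. You instead track the $\ell^1$-norm, which is also non-increasing under admissible moves, and then argue that once it stabilises a sign-pattern analysis traps the directive letters in a two-element set. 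Both routes are elementary; the spread argument is slightly shorter because the strict decrease is tied to a condition ($s_m\neq s_{m+1}$) that is immediately known to occur infinitely often, whereas your route needs the extra bookkeeping on sign patterns, including the zero-coordinate cases (where, incidentally, your phrase ``exactly one of the other two moves'' should read ``at most one'' to cover $v=(0,b,-b)$, though the conclusion is unaffected).
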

 
 This theorem completes the works of Arnoux and Starosta, who conjectured it in 2013, to prove that the Arnoux-Rauzy continued fraction algorithm  detects all kind of rational dependencies \cite{AS13}. Note that it has been recently proved by Dynnikov, Hubert and Skripchenko  using quadratic forms \cite{DPS}.
 
  Again, with a similar proof (see \cite{And20}), this result holds in arbitrary dimension:
 
 \begin{theorembis}
 Let $d \geq 2$. The vector of letter frequencies of any strict episturmian word over $\{1,...,d\}$ has
 	rationally independent entries.
 \end{theorembis}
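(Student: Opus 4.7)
The plan is to adapt, to arbitrary dimension, the S-adic argument used to prove Theorem \ref{th:th2} for $d=3$. Any strict episturmian word $w$ over $\{1,\ldots,d\}$ admits a directive sequence $(i_n)_{n \geq 1} \in \{1,\ldots,d\}^\N$, in which every letter appears infinitely often, together with desubstituted words $(w^{(n)})_{n \geq 0}$ satisfying $w^{(n-1)} = \sigma_{i_n}(w^{(n)})$. Here $\sigma_i$ is the standard episturmian morphism $\sigma_i(i) = i$, $\sigma_i(j) = ij$ for $j \neq i$, whose abelianization is $M_i = I + e_i(\mathbf{1}-e_i)^T$; its transpose $M_i^T$ acts on a vector by fixing the $i$-th coordinate and adding $c_i$ to every other one. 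Passing to frequencies, $\mathbf{f}(w^{(n-1)}) \propto M_{i_n}\, \mathbf{f}(w^{(n)})$.

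Assume, for contradiction, that some $c \in \Z^d \setminus \{0\}$ satisfies $\langle c, \mathbf{f}(w) \rangle = 0$, and set $c^{(n)} = M_{i_n}^T \cdots M_{i_1}^T c$. Each $c^{(n)}$ is a nonzero integer vector (the $M_i$ being invertible over $\Q$) with $\langle c^{(n)}, \mathbf{f}(w^{(n)}) \rangle = 0$; since $\mathbf{f}(w^{(n)})$ has strictly positive entries, each $c^{(n)}$ must contain both strictly positive and strictly negative coordinates. Introduce the positive and negative masses $P^{(n)} = \sum_{c_j^{(n)} > 0} c_j^{(n)}$ and $N^{(n)} = -\sum_{c_j^{(n)} < 0} c_j^{(n)}$, both positive integers at least $1$. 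Direct inspection of the formula for $M_i^T$ shows that $N^{(n)}$ strictly decreases at every step where the directive letter $i_n$ satisfies $c_{i_n}^{(n-1)} > 0$, and dually $P^{(n)}$ strictly decreases whenever $c_{i_n}^{(n-1)} < 0$.

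The remaining task, and the main obstacle, is to locate infinitely many steps at which one of these decreases actually occurs. The key is to combine the above monotonicities with the fact that every letter of $\{1,\ldots,d\}$ appears infinitely often in $(i_n)$: from this combinatorial hypothesis I plan to extract, for every $n$, a later step $m > n$ at which the directive letter $i_m$ has the required sign in $c^{(m-1)}$. Establishing this amounts to a finite case analysis on the evolution of the sign pattern of $c^{(n)}$ in $\{-,0,+\}^d$, using the observation that a prolonged absence of decrease in $N^{(n)}$ (respectively $P^{(n)}$) would force $c^{(n)}$ to eventually have purely nonpositive (resp.\ nonnegative) coordinates, contradicting $\langle c^{(n)}, \mathbf{f}(w^{(n)}) \rangle = 0$. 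The resulting infinite descent of $\min(P^{(n)}, N^{(n)})$ in $\N$ then yields the contradiction, and hence the rational independence of the coordinates of $\mathbf{f}(w)$. The full argument for general $d$ is carried out in \cite{And20}.
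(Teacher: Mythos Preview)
Your setup --- desubstituting along the directive sequence, pushing an integer orthogonal vector forward via $c^{(n)}=M_{i_n}^{T}\cdots M_{i_1}^{T}c$, and noting that each $c^{(n)}$ is a nonzero integer vector orthogonal to a strictly positive one --- is exactly the framework of the paper's proof of Theorem~\ref{th:main2}, transported to dimension $d$. The divergence is in the potential chosen for the descent.

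Your claim that $N^{(n)}$ strictly drops whenever $c_{i_n}^{(n-1)}>0$ (and dually for $P^{(n)}$) is correct, as is the observation that a permanent absence of such steps would eventually push $c^{(n)}$ into a closed half-space. But these two facts do \emph{not} yield an infinite descent of $\min(P^{(n)},N^{(n)})$: neither $P^{(n)}$, nor $N^{(n)}$, nor their minimum is monotone along the orbit. For instance with $d=3$, $c^{(n-1)}=(5,-1,-1)$ and $i_n=2$ one obtains $c^{(n)}=(4,-1,-2)$, and $\min(P,N)$ jumps from $2$ to $3$. Nothing in your argument prevents such increases from compensating the one-step decreases you exhibit, so the final sentence of your sketch is unjustified.

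The paper's remedy (carried out for $d=3$, and by the same computation in general) is to track instead the \emph{spread} $D^{(n)}=\max_j c^{(n)}_j-\min_j c^{(n)}_j$. Writing out the pairwise differences of the entries of $M_i^{T}c$ one checks that $D^{(n)}$ is genuinely non-increasing, with equality exactly when both extremal coordinates of $c^{(n-1)}$ avoid the position $i_n$; since the extremes have opposite signs and every letter recurs infinitely often in the directive sequence, equality cannot persist. Replacing $(P,N)$ by the spread repairs your argument; as written, the descent step is a genuine gap.
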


\setcounter{th}{0}

 \section{Preliminaries}
 \label{sect:preliminaries}


We denote by $\Afrak^*$ the set of all finite words over an alphabet $\Afrak$.
A finite word  $u = u[0]u[1] ... u[n-1]$, where $u[k]$ denotes the $(k+1)$-th letter of $u$, is a \emph{factor of length $n$} of a (finite or infinite) word $w$ if there exists a nonnegative integer $i$ such that for all $k \in \{0,...,n-1\}$, $w[i+k]=u[k]$; in the particular case $i = 0$, we say that $u$ is the \emph{prefix of length $n$} of $w$, and denote it by $u=p_n(w)$. We denote by $\Fcal_n(w)$ the set of factors of $w$ of length $n$ and by $\Fcal(w)$ its set of factors of all lengths.


A \emph{substitution} is an application mapping letters to finite words: $\Afrak \mapsto \Afrak^*$, that we extend into a morphism on the free monoid for the concatenation operation $\Afrak^*$ on one hand, and on the set of infinite words $\Afrak^{\N}$ on the other hand. Three substitutions will be of high interest in this paper: $\sigma_1$, $\sigma_2$ and $\sigma_3$ defined over $A=\{1,2,3\}$ by:
$$
\begin{array}{ll}
\sigma_i : & A \rightarrow A^* \\
& i \mapsto i \\
& j \mapsto ij \text{ for } j\in A\backslash \{i\}.
\end{array}
$$
They are called \emph{Arnoux-Rauzy substitutions}; we denote $AR=\{ \sigma_1,\sigma_2, \sigma_3\}$. The set $AR$ can be seen as a three letter alphabet -it should not be confused with $A = \{1,2,3\}$ over which the substitutions are defined. As much as we can, we refer to the elements of $AR^*$ or $AR^{\N}$ as "sequences" instead of "words"; nonetheless, some tools like the notions of factor and prefix will turn out to be useful for this second alphabet as well, especially in Section \ref{sect:results}. 

\medskip

The set $\Afrak^{\N}$ of infinite words over $\Afrak$ is endowed with the distance $\delta$: for all $w,w' \in \Afrak^{\N}$, $\delta(w,w') = 2^{-n_0}$, where $n_0=\min\{n\in \N \vert w[n]\neq w'[n]\}$ if $w \neq w'$, and $\delta(w,w')=0$ otherwise. We say that a sequence of finite words $(u_n)_{n \in \N} \in (\Afrak^*)^{\N}$ \emph{converges} to an infinite word $w \in \Afrak^{\N}$ if for any sequence of infinite words $(v_n)_{n \in \N} \in (\Afrak^{\N})^{\N}$, the sequence of infinite words $(u_n\cdot v_n)_{n \in \N} \in (\Afrak^{\N})^{\N}$ converges to $w$. 

If $(s_n)_{n \in \N} \in AR^{\N}$ is a sequence containing infinitely many occurrences of each Arnoux-Rauzy substitution $\sigma_1, \sigma_2$ and $\sigma_3$, then the sequence of finite words $(s_0 \circ ... \circ s_{n-1}(\alpha))$, with $\alpha \in A$, converges to an infinite word $w_0$ which does not depend on $\alpha$. The infinite words $w_0$ obtained this way are called \emph{standard Arnoux-Rauzy words}.
An infinite word $w$ is an \emph{Arnoux-Rauzy word} if it has the same set of factors than a standard Arnoux-Rauzy word $w_0$. One can show that the standard Arnoux-Rauzy word $w_0$ and the \emph{directive sequence} $(s_n)_{n\in\N}$ associated with $w$ are unique. This definition of Arnoux-Rauzy words is equivalent to the more usual one:  
an infinite word is an \emph{Arnoux-Rauzy word} if it has complexity $2n+1$ and admits exactly one right and one left special factor of each length.
\medskip

Given a finite word $u \in \Afrak^*$ and a letter $\alpha \in \Afrak$, we denote by $\vert u \vert_{\alpha}$ the number of occurrences of $\alpha$ in $u$. The \emph{abelianized vector} of $u$, sometimes called \emph{Parikh vector} of $u$, is the vector $\ab(u) = (\vert u \vert_{\alpha})_{\alpha \in \Afrak}$, which counts the number of times that each letter occurs in the finite word $u$. At this point, it is useful to order the alphabet.  
For the convenience of typing, we choose to represent abelianized words as line vectors.  Observe that the sum of the entries of $\ab(u)$ is equal to the \emph{length} of the word $u$, that we denote by $\vert u \vert$. Now, given a substitution $s : \Afrak \rightarrow \Afrak^*$, the \emph{incidence matrix} of $s$ is the matrix $M_s$ whose $i-th$ row is the abelianized of the image by $s$ of the $i-th$ letter in the alphabet.
For instance, the incidence matrices of the Arnoux-Rauzy substitutions are:
$$M_{\sigma_1} = \begin{pmatrix}
1 & 0 & 0\\
1 & 1 & 0 \\
1 & 0 & 1
\end{pmatrix}, \qquad M_{\sigma_2} = \begin{pmatrix}
1 & 1 & 0\\
0 & 1 & 0 \\
0 & 1 & 1
\end{pmatrix} \quad \text{ and } \quad M_{\sigma_3} = \begin{pmatrix}
1 & 0 & 1\\
0 & 1 & 1 \\
0 & 0 & 1
\end{pmatrix} \quad \in GL_3(\Z).
$$
Abelianized words and incidence matrices are made to satisfy: $\ab(s(u)) = \ab(u)M_s$ for any substitution $s : \Afrak \rightarrow \Afrak^*$ and any finite word $u \in \Afrak^*$. 


\medskip

If $w \in \Afrak^{\N}$ is an infinite word and $\alpha \in \Afrak$ is a letter, the frequency of $\alpha$ in $w$ is the limit, if it exists, of the proportion of $\alpha$ in the sequence of growing prefixes of $w$: $f_w(\alpha) = \lim_{n \rightarrow \infty} \frac{\vert p_n(w)\vert_\alpha}{n}$. We denote by $f_w = (f_w(\alpha))_{\alpha \in \Afrak}$ the \emph{vector of letter frequencies} of $w$, if it exists.
When the vector of letter frequencies exists, as it is the case for any Arnoux-Rauzy word, it is natural to study the difference between the predicted frequencies of letters and their observed occurrences. Given an infinite word $w \in \Afrak^{\N}$ for which the vector of letter frequencies is defined, we consider the \emph{discrepancy function}:
$$
\begin{array}{l}
\N \rightarrow \R\\
 n \mapsto \max_{\alpha\in \Afrak} \vert \, \vert p_n(w) \vert_\alpha - nf_w(\alpha)\vert .
\end{array}
$$ 
The discrepancy is linked to a combinatorial property: the imbalance. The {imbalance} of an infinite word $w$ is the quantity (possibly infinite) :
$$
\imb(w) \;=\; \underset{n \in \N}{\sup} \quad \underset{u,v \in \Fcal_n(w)}{\sup} \quad \vert \vert \ab(u)-\ab(v)\vert \vert_{\infty}.
$$
The imbalance of an infinite word $w$ is finite if and only if its discrepancy function is bounded. 
Geometrically, the discrepancy is linked to the diameter of the Rauzy fractal. 
Let $\Delta_0$ denotes the plane of $\R^3$ with equation $x+y+z=0$. For $w$ an Arnoux-Rauzy word, denote by $f_w$ its letter frequencies vector and by $\pi_w$ the (oblique) projection onto $\Delta_0$ associated with the direct sum: $\R f_w  \oplus \Delta_0 = \R^3$. The \emph{Rauzy fractal} of $w$, denoted by $\Rcal_w$, is the closure of the image of the set of abelianized prefixes of $w$ (the \emph{broken line of $w$}) by the projection $\pi_w$: $\Rcal_w = \overline{\cup_{k\in \N}\{\pi_w (\ab(p_k(w)))\}} \subset \Delta_0$. Note that the statement of our main result (Theorem \ref{th:main}) does not depend on the choice of the plane we project onto.

%
%

\section{Results} \label{sect:results}

\begin{lemma} \label{lemma:families}
	For any $(a,b,c) \in \Z^3$, there exists $s \in AR^*$ and there exist $u, v \in \Fcal(s(1))$ that satisfy  $\ab(u)-\ab(v)=(a,b,c)$.
\end{lemma}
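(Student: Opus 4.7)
The plan is to prove the lemma by induction on $N(a,b,c) := |a|+|b|+|c|$, combining a handful of base cases with a closure property inherited from the Arnoux-Rauzy substitutions themselves. For $N \leq 1$, taking $s$ to be the empty sequence handles $(0,0,0)$ and $\pm e_1$ (since $s(1) = 1$), while $s = \sigma_2$ (so that $s(1) = 21$) provides $\pm e_2$ with $v = \emptyset$ and even the richer vectors $\pm(e_1 - e_2)$ via $u = 1, v = 2$; symmetrically $s = \sigma_3$ yields $\pm e_3$ and $\pm(e_1 - e_3)$, and $s = \sigma_2 \sigma_3$ (so that $s(1) = 2321$) yields $\pm(e_2 - e_3)$. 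Thus the base already realizes all $\pm e_i$ and all $\pm(e_i - e_j)$.

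The key closure property is the following: if $(a',b',c')$ is realized by $s' \in AR^*$ with factors $u', v' \in \Fcal(s'(1))$, then for each $i \in \{1,2,3\}$ the composition $s := \sigma_i s' \in AR^*$ satisfies $s(1) = \sigma_i(s'(1))$, and the images $\sigma_i(u'), \sigma_i(v')$ are factors of $s(1)$ with abelianized difference $(a',b',c')\,M_{\sigma_i}$. Hence the set $D := \{\ab(u) - \ab(v) : s \in AR^*,\ u,v \in \Fcal(s(1))\}$ is stable under right-multiplication by each $M_{\sigma_i}$ and, trivially by swapping $u$ and $v$, under negation.

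The inductive step then selects an index $i$ so that $(a',b',c') := (a,b,c)\,M_{\sigma_i}^{-1}$ has strictly smaller $N$: by induction $(a',b',c') \in D$, and by the closure $(a,b,c) = (a',b',c')\,M_{\sigma_i} \in D$. Writing out $(a,b,c)\,M_{\sigma_1}^{-1} = (a - b - c,\ b,\ c)$ and its two analogues shows that this is exactly the subtractive Arnoux-Rauzy algorithm, which in the positive orthant (and its opposite, by negation) strictly decreases $N$ when we pick $i$ to be any coordinate at least as large as the sum of the other two --- an inequality that holds at every integer step by a pigeonhole argument on the three coordinates.

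The main obstacle is the \emph{mixed-sign} case, where the bare Arnoux-Rauzy inverse can fail to decrease $N$. To handle it, I would enlarge the base further: for any $s' \in AR^*$, the differences $\pm(e_i - e_j)\,M_{s'}$ lie in $D$ because $s'(i)$ and $s'(j)$ both occur as factors of $s'(\sigma_i\sigma_j)(1) = s'(i)\,s'(j)\,s'(i)\,s'(1)$ after appending suitable substitutions. I would then perform a case analysis on the sign pattern of $(a,b,c)$, showing that in each of the eight sign sectors one can peel off a suitable anchor of the form $\pm(e_i - e_j)\,M_{s'}$ that absorbs the sign imbalance and reduces the mixed-sign input to a same-sign one treated by the previous paragraph. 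This sign-sector analysis, essentially a Euclidean-type termination argument lifted to $\Z^3$, is the technical heart of the proof.
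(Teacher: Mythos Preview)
Your closure observation is correct: if $u',v'\in\Fcal(s'(1))$ realize $(a',b',c')$, then $\sigma_i(u'),\sigma_i(v')\in\Fcal(\sigma_i s'(1))$ realize $(a',b',c')M_{\sigma_i}$. But the induction built on it breaks down, and the error is concrete.

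Your pigeonhole claim is false. It is \emph{not} true that every positive integer triple has one coordinate at least the sum of the other two; take $(1,1,1)$ or $(2,3,4)$. This is precisely why the Arnoux--Rauzy map $F_{AR}$ is only defined on a measure-zero subset of the simplex. Even worse, along an axis your step stalls completely: for $(n,0,0)$ with $n\geq 2$ one has
\[
(n,0,0)M_{\sigma_1}^{-1}=(n,0,0),\qquad (n,0,0)M_{\sigma_2}^{-1}=(n,-n,0),\qquad (n,0,0)M_{\sigma_3}^{-1}=(n,0,-n),
\]
so $N$ never decreases. Your enlarged base $\{\pm e_iM_{s'},\ \pm(e_i-e_j)M_{s'}\}$ does not rescue this: $(2,0,0)$ is not of either form (no $s'\in AR^*$ has $\ab(s'(i))=(2,0,0)$, and differences of two rows of a product of AR matrices with value $(2,0,0)$ would force a row $(r_1,r_2,r_3)$ and $(r_1+2,r_2,r_3)$, impossible for images of distinct letters under AR substitutions). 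So the induction on $N$ cannot close, and your final paragraph on the mixed-sign sectors is a promise rather than an argument.

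What is missing is exactly the extra move the paper exploits. When you pass from $u'$ to $\sigma_i(u')$ inside $\sigma_i(s'(1))$, the image of the letter following $u'$ begins with $i$, and $\sigma_i(u')$ itself begins with $i$; hence you may append or delete a single $i$ at either end and still stay in $\Fcal(\sigma_i s'(1))$. Doing this asymmetrically on $u'$ and $v'$ replaces your single map $(x_1,x_2,x_3)\mapsto(x_1,x_2,x_3)M_{\sigma_i}$ by the family $\tau_{i,\delta}$, $\delta\in\{-2,-1,0,1,2\}$, sending $(x_j)$ to the vector with $i$-th entry $x_1+x_2+x_3+\delta$ and the others unchanged. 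These boundary corrections are what let you leave $(0,0,0)$ at all (your pure closure fixes the origin) and what make the accessibility argument terminate; the paper then shows every triple is reachable from $O$ via these $\tau_{i,\delta}$, first reaching the anchors $(a,-a,-a)$ with the $\delta\neq 0$ moves and then filling in with $\tau_{2,0},\tau_{3,0}$. Your scheme can be repaired along these lines, but not with $M_{\sigma_i}$-closure alone.
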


\begin{remark}[Abuse of notation] \label{rk1}
	If $s = s_0\cdot ... \cdot s_{n-1} \in AR^*$, and if $w \in A^*\cup A^{\N}$, then $s(w)$ denotes the image of the word $w$ by the substitution $s_0 \circ ... \circ s_{n-1}$.
	\end{remark}
\begin{proof}
	Section \ref{sect:proof} is devoted to the proof of Lemma \ref{lemma:families}.
\end{proof}

Therefore, all standard Arnoux-Rauzy words -and thereby all Arnoux-Rauzy words- whose directive sequence starts with the prefix $s$ will admit $(a,b,c)$ as difference of abelianized factors.

\begin{lemma} \label{lemma:prefix}
	For any $p \in AR^*$ and any $(a,b,c) \in \Z^3$, there exists $s \in AR^*$ and there exist $u, v \in \Fcal(p\cdot s(1))$ that satisfy $\ab(u)-\ab(v)=(a,b,c)$.
\end{lemma}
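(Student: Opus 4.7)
The plan is to reduce Lemma \ref{lemma:prefix} to Lemma \ref{lemma:families} by exploiting the invertibility of the incidence matrix $M_p$. The idea is simple: Arnoux-Rauzy substitutions send factors to factors, and their incidence matrices lie in $GL_3(\Z)$, so we can pre-compensate the target abelianized difference before applying Lemma \ref{lemma:families}.

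First I would note that each of $M_{\sigma_1}, M_{\sigma_2}, M_{\sigma_3}$ belongs to $GL_3(\Z)$ (they are triangular with ones on the diagonal), hence so does $M_p = M_{p_0} M_{p_1} \cdots M_{p_{k-1}}$ for any $p = p_0 \cdot \ldots \cdot p_{k-1} \in AR^*$. In particular $M_p^{-1}$ has integer entries, so the vector
\[
(a',b',c') \;:=\; (a,b,c)\, M_p^{-1}
\]
lies in $\Z^3$.

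Next I would apply Lemma \ref{lemma:families} to this integer vector: it yields some $s \in AR^*$ and two factors $u', v' \in \Fcal(s(1))$ such that $\ab(u') - \ab(v') = (a',b',c')$. Since $s(1)$ is a factor of $p \cdot s(1) = p(s(1))$ in an abelianized sense only after applying $p$, the right move is to push $u'$ and $v'$ through the morphism $p$: set $u := p(u')$ and $v := p(v')$. Because $p$ is a morphism for concatenation, images of factors of $s(1)$ are factors of $p(s(1)) = p \cdot s(1)$, so $u, v \in \Fcal(p \cdot s(1))$. Finally, using the relation $\ab(p(w)) = \ab(w)\, M_p$ recalled in Section \ref{sect:preliminaries}, one gets
\[
\ab(u) - \ab(v) \;=\; \bigl( \ab(u') - \ab(v') \bigr)\, M_p \;=\; (a',b',c')\, M_p \;=\; (a,b,c),
\]
which concludes the proof.

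There is essentially no obstacle here once Lemma \ref{lemma:families} is granted: the only thing to check carefully is that the computation of incidence matrices behaves correctly under composition in $AR^*$ (which follows directly from the convention of Remark \ref{rk1}) and that $M_p \in GL_3(\Z)$. The real content of the statement is already contained in Lemma \ref{lemma:families}; Lemma \ref{lemma:prefix} is the ``stability under arbitrary prefix'' version that will be used iteratively later on to build the directive sequence of the sought Arnoux-Rauzy word.
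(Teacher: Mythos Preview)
Your proof is correct and follows exactly the same route as the paper: invert $M_p\in GL_3(\Z)$, apply Lemma \ref{lemma:families} to $(a,b,c)M_p^{-1}$, then push the resulting factors through $p$. The only (harmless) slip is the order in which you multiply the incidence matrices when writing $M_p$ explicitly; since you only use the relation $\ab(p(w))=\ab(w)M_p$ and the fact that $M_p\in GL_3(\Z)$, this does not affect the argument.
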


\begin{proof}
	Let $p \in AR^*$ and $(a,b,c) \in \Z^3$. Denote by $M_p$ the incidence matrix of the substitution associated with $p$ (following Remark \ref{rk1}), which is a product of the Arnoux-Rauzy matrices $M_{\sigma_1},M_{\sigma_2}$ and $M_{\sigma_3}$, and thus belongs to $GL_3(\Z)$. By Lemma \ref{lemma:families}, there exists $s \in AR^*$ and there exist $u$ and $v \in \Fcal(s(1))$ such that $\ab(u)-\ab(v)= (a,b,c)\,M_p^{-1}$. But then, $p(u)$ and $p(v)$ are factors of $\Fcal(p\cdot s(1))$ and satisfy $\ab(p(u))-\ab(p(v))=(\ab(u)-\ab(v))M_p=(a,b,c)$. 
\end{proof}

We now construct a standard Arnoux-Rauzy word for which all triplets of integers can be obtained as a difference of two of its abelianized factors.

\begin{proposition} \label{prop:w_star}
	There exists an Arnoux-Rauzy word $w_{\infty}$ such that for all $(a,b,c) \in \Z^3$, there exist $u$ and $v \in \Fcal(w_{\infty})$ satisfying $\ab(u)-\ab(v)=(a,b,c)$.
\end{proposition}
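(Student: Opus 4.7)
The plan is to construct $w_\infty$ by a diagonal argument on $\Z^3$, feeding the triplets one by one into Lemma \ref{lemma:prefix} while growing the directive sequence on the fly. Fix an enumeration $\Z^3 = \{(a_n,b_n,c_n) : n \geq 1\}$ and build a nested sequence $(q_n)_{n \in \N}$ in $AR^*$ whose infinite concatenation will be the directive sequence of $w_\infty$.

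Start with $q_0 = \varepsilon$. Given $q_{n-1}$, apply Lemma \ref{lemma:prefix} with prefix $q_{n-1}$ and triplet $(a_n,b_n,c_n)$ to obtain some $s^{(n)} \in AR^*$ together with factors $u_n, v_n \in \Fcal(q_{n-1}\cdot s^{(n)}(1))$ satisfying $\ab(u_n) - \ab(v_n) = (a_n, b_n, c_n)$; then set $q_n = q_{n-1} \cdot s^{(n)} \cdot \sigma_{j_n}$, where $j_n \in \{1,2,3\}$ is chosen cyclically so that each of the three Arnoux-Rauzy substitutions occurs infinitely often in the resulting infinite sequence $s_\infty \in AR^{\N}$. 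By the discussion of Section \ref{sect:preliminaries}, $s_\infty$ is then the directive sequence of a well-defined standard Arnoux-Rauzy word $w_\infty$.

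The one point that deserves checking is that the factors $u_n, v_n$ produced at step $n$ do lie in $\Fcal(w_\infty)$. For this, denote by $w_\infty^{(n)}$ the standard Arnoux-Rauzy word associated with the tail directive sequence $(s_\infty[k])_{k \geq |q_n|}$, so that $w_\infty = q_n(w_\infty^{(n)})$. Since $w_\infty^{(n)}$ contains the letter $1$, the finite word $q_n(1)$ is a factor of $w_\infty$; and a direct inspection of the three substitutions shows that $\sigma_{j_n}(1)$ always ends with $1$, so $q_{n-1}\cdot s^{(n)}(1)$ is a suffix of $q_n(1)$ and hence $u_n, v_n \in \Fcal(w_\infty)$. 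As the enumeration exhausts $\Z^3$, this will conclude the argument. The only real obstacle lies upstream, in Lemma \ref{lemma:families}: once it is established, Lemma \ref{lemma:prefix} follows by a single matrix inversion and the present proposition reduces to the diagonal bookkeeping above.
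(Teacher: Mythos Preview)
Your argument is correct and follows essentially the same diagonal construction as the paper: enumerate $\Z^3$, iteratively extend the directive prefix using Lemma~\ref{lemma:prefix}, and interleave extra substitutions (you cycle $\sigma_{j_n}$, the paper inserts $\sigma_1\sigma_2\sigma_3$) to guarantee each $\sigma_i$ occurs infinitely often. Your explicit check that $q_{n-1}\cdot s^{(n)}(1)$ is a suffix of $q_n(1)$, and hence that $u_n,v_n$ survive as factors of $w_\infty$, is a point the paper leaves implicit.
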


\begin{proof}
	Let $\varphi : \N \rightarrow \Z^3$ a bijection (that can be chosen explicitly). We construct an infinite word $d \in AR^{\N}$ as the limit of the sequence of finite words $(p_k)_{k \in \N}\in (AR^*)^{\N}$ that we define by recurrence as follows.
	We first set $p_0$ as the prefix given by Lemma \ref{lemma:families} for $(a,b,c)=\varphi(0)$. Now, for $k \in \N$, we set $p_{k+1} = p_k.\sigma_1.\sigma_2.\sigma_3.s$, where $s \in AR^*$ is given by applying Lemma \ref{lemma:prefix} to the word $p_k.\sigma_1.\sigma_2.\sigma_3 \in AR^*$ and the vector $\varphi(k+1) \in \Z^3$. By construction, the sequence of finite words $(p_k)_{k \in \N}$ converges to an infinite sequence $d$ which contains infinitely many occurrences of $\sigma_1, \sigma_2$ and $\sigma_3$. This guarantees that the sequence of finite words $(d_0\circ ... \circ d_{n-1}(1))_{n\in \N}$ converges to an Arnoux-Rauzy word, that we denote by $w_{\infty}$. Finally, for any $k \in \N$, since the directive sequence of $w_{\infty}$ starts with the prefix $p_k$, there exist  $u_k, v_k \in \Fcal(w_{\infty})$ such that $\ab(u_k)-\ab(v_k) = \varphi(k)$.
	\end{proof}

\begin{corollary} \label{cor:imbalance} The imbalance of the word $w_{\infty}$ is infinite. 
\end{corollary}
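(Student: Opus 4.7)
The plan is to deduce the corollary directly from Proposition \ref{prop:w_star}, with one small adjustment: the definition of $\imb$ requires the factors $u$ and $v$ to have the \emph{same length}, whereas the proposition produces factors with prescribed abelianized difference but unconstrained lengths. Fortunately, the length constraint can be absorbed into the choice of the target vector in $\Z^3$.

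First I would observe that if $u, v \in \Fcal(w_{\infty})$ satisfy $\ab(u) - \ab(v) = (a,b,c)$, then $|u| - |v| = a + b + c$. Hence $u$ and $v$ automatically have the same length whenever the chosen target vector lies in the diagonal plane $\Delta_0 : x+y+z = 0$.

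Second, I would apply Proposition \ref{prop:w_star} to the family of vectors $(n,-n,0) \in \Z^3$ for $n \in \N$. For each $n$, this yields factors $u_n, v_n \in \Fcal(w_{\infty})$ with $\ab(u_n) - \ab(v_n) = (n,-n,0)$. Since $n + (-n) + 0 = 0$, we have $|u_n| = |v_n|$, so $u_n, v_n \in \Fcal_{|u_n|}(w_{\infty})$ and therefore
$$\imb(w_{\infty}) \;\geq\; \|\ab(u_n) - \ab(v_n)\|_{\infty} \;=\; n.$$
Letting $n \to \infty$ gives $\imb(w_{\infty}) = +\infty$.

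There is no real obstacle here: once Proposition \ref{prop:w_star} is in hand, the only subtlety is the equal-length requirement built into the definition of imbalance, and this is handled by the observation that any vector in $\Z^3 \cap \Delta_0$ is the abelianized difference of two factors of equal length. Any sequence of vectors in $\Z^3 \cap \Delta_0$ with unbounded $\infty$-norm would work just as well as $(n,-n,0)$.
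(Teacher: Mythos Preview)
Your proof is correct and is essentially the same as the paper's: the paper applies Proposition \ref{prop:w_star} to the vectors $(n,0,-n)\in\Delta_0$ (you chose $(n,-n,0)$, which is an immaterial difference) to obtain equal-length factors whose abelianized difference has $\infty$-norm $n$. Your explicit remark that the equal-length condition forces the target vector to lie in $\Delta_0$ is the same observation the paper uses implicitly.
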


\begin{proof}
	For any $n \in \N$, there exist $u_n$ and $v_n \in \Fcal(w_{\infty})$ such that $\ab(u_n)-\ab(v_n)=(n,0,-n)$; this implies both $\vert u_n\vert = \vert v_n \vert$ and $\vert u_n\vert_1 - \vert v_n \vert_1 = n$. The imbalance of $w_{\infty}$ is thus infinite.
\end{proof}

The imbalance of a word, which is a combinatorial quantity, is linked to the geometrical shape of its associated broken line. More precisely: a word $w$ admitting frequencies has an infinite imbalance if and only if its Rauzy fractal is unbounded.
We now propose to show that the word $w_{\infty}$ actually satisfies a stronger property: its Rauzy fractal is unbounded in \emph{all directions} of the plane. This relies on the following proposition.

\begin{proposition} \label{prop:geom}
	Let $w \in A^{\N}$. If for all $\textbf{d} \in \Z^3 \cap \Delta_0$, where $\Delta_0$ denotes the plane of $\R^3$ with equation $x+y+z=0$, there exist $u$ and $v \in \Fcal(w)$ such that $\ab(u)-\ab(v)=\textbf{d}$, then, for any plane $\Pi$ and for any $D \in \R^+$, there exists $k\in \N$ such that the euclidean distance between the point $\ab(p_k(w))$ and the plane $\Pi$ is larger than $D$.
\end{proposition}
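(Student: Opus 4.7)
The plan is to argue by contradiction: I assume the existence of a plane $\Pi \subset \R^3$ and a bound $D \geq 0$ such that every vertex of the broken line $(\ab(p_k(w)))_{k \in \N}$ lies within Euclidean distance $D$ of $\Pi$, and I derive a contradiction with the hypothesis on differences of abelianized factors. Writing $\Pi = \{x \in \R^3 : \mathbf{n} \cdot x = c\}$ for a unit normal $\mathbf{n}$ and some $c \in \R$, the slab assumption reads $\mathbf{n} \cdot \ab(p_k(w)) \in [c - D,\, c + D]$ for every $k \in \N$.

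The bridge between this geometric control and the combinatorial hypothesis is the elementary observation that any difference of abelianized factors of $w$ unfolds into four abelianized prefixes: if $u$ occurs in $w$ at position $i$ with length $n$, then $\ab(u) = \ab(p_{i+n}(w)) - \ab(p_i(w))$, and similarly for $v$. Consequently, $\mathbf{n} \cdot (\ab(u) - \ab(v))$ is a signed sum of four terms of the form $\mathbf{n} \cdot \ab(p_k(w))$, so it lies in $[-4D,\, 4D]$. Since the hypothesis of the proposition realizes every element of $\Z^3 \cap \Delta_0$ as such a difference, the linear form $\mathbf{d} \mapsto \mathbf{n} \cdot \mathbf{d}$ must be bounded on the entire rank-$2$ lattice $\Z^3 \cap \Delta_0$.

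I then close by a short case analysis on the direction of $\mathbf{n}$. If $\mathbf{n}$ is collinear with $(1,1,1)$, then $\mathbf{n} \cdot \ab(p_k(w))$ is a nonzero scalar multiple of $|p_k(w)| = k$, which is already unbounded in $k$ and contradicts the slab assumption directly, without even invoking the combinatorial hypothesis. Otherwise the restriction of $\mathbf{n}$ to $\Delta_0$ is a nonzero linear form: picking two indices $i \neq j$ with $n_i \neq n_j$, the vectors $N(e_i - e_j) \in \Z^3 \cap \Delta_0$ satisfy $\mathbf{n} \cdot N(e_i - e_j) = N(n_i - n_j) \to \pm\infty$, contradicting the boundedness obtained above. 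The main, rather modest, obstacle is conceptual: one must notice that the control obtained on differences bears only on the sublattice $\Z^3 \cap \Delta_0$ and not on the whole of $\Z^3$, which is exactly what forces the separate treatment of the distinguished direction $(1,1,1)$.
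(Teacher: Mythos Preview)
Your proof is correct and follows essentially the same route as the paper's: separate the degenerate case where the plane is parallel to $\Delta_0$ (handled directly via $|p_k(w)|=k$), and otherwise argue by contradiction, using that any factor abelianization is a difference of two prefix abelianizations to deduce a uniform bound $|\mathbf{n}\cdot\mathbf{d}|\le 4D$ on all $\mathbf{d}\in\Z^3\cap\Delta_0$, then exhibit a lattice point violating it. The only cosmetic difference is that the paper phrases things in terms of Euclidean distance to $\Pi$ and passes through the intermediate inequality $\dist(\ab(u),\Pi)>2D$ before invoking two prefixes, whereas you work directly with the linear form $\mathbf{n}\cdot(\,\cdot\,)$ and unfold into four prefixes at once; the constants and the logic coincide.
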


\begin{proof}
	 	Without loss of generality, we can assume that $\Pi$ contains $(0,0,0)$. \\
	 	If $\Pi = \Delta_0$, then for any $D \in \R^+$, $\dist(\ab(p_k(w)),\Pi)> D$, with $k = \lfloor D\sqrt{3}/3 \rfloor +1$. \\
	 	Let $\Pi \neq \Delta_0$. By contradiction, assume that there exists $D \in \R^+$ such that for all nonnegative integer $k$, $\dist(\ab(p_k(w)),\Pi)\leq D$. Let $\textbf{d} \in \Z^3 \cap \Delta_0$ with $\dist(\textbf{d},\Pi)> 4D$, and factors $u, v \in \Fcal(w)$ such that $\ab(u)-\ab(v)=\textbf{d}$. Then, without loss of generality, we have $\dist(\ab(u),\Pi)> 2D$. Let $t \in A^*$ be such that $tu$ is a prefix of $w$. Then we have $\dist(\ab(t),\Pi)> D$ or $\dist(\ab(tu),\Pi)> D$, a contradiction.
\end{proof}

\begin{remark}
	Proposition \ref{prop:geom} and its proof remain valid by replacing $\Delta_0$ by any other plane whose intersection with $\Z^3$ is not trapped between two parallel lines.
\end{remark}

\begin{theorem}\label{th:main}
	There exists an Arnoux-Rauzy word whose Rauzy fractal is unbounded in all directions of the plane.
\end{theorem}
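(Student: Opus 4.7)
The plan is to combine Proposition \ref{prop:w_star} with Proposition \ref{prop:geom} to prove the theorem, by producing the word $w_{\infty}$ from the former and then observing that unboundedness of the broken line in the direction normal to an arbitrary plane translates, after projection along $f_{w_{\infty}}$, into unboundedness of the Rauzy fractal in an arbitrary direction of $\Delta_0$.

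First, I would take $w_{\infty}$ the Arnoux-Rauzy word constructed in Proposition \ref{prop:w_star}, so that for every $(a,b,c)\in\Z^3$ there exist $u,v\in\Fcal(w_{\infty})$ with $\ab(u)-\ab(v)=(a,b,c)$. In particular, the hypothesis of Proposition \ref{prop:geom} is satisfied (we only need this for integer vectors lying in $\Delta_0$, but we have it for all of $\Z^3$), so for every plane $\Pi$ of $\R^3$ the sequence of distances $\dist(\ab(p_k(w_{\infty})),\Pi)$ is unbounded.

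Next, I would translate this into a statement about the Rauzy fractal. Let $\mathbf{v}\in\Delta_0\setminus\{0\}$ be an arbitrary direction in the plane onto which we project, and let $L_{\mathbf{v}}$ be the line of $\Delta_0$ through the origin orthogonal to $\mathbf{v}$ inside $\Delta_0$. Define the plane $\Pi = \R f_{w_{\infty}} + L_{\mathbf{v}}$ of $\R^3$. Since the oblique projection $\pi_{w_{\infty}}$ is the projection onto $\Delta_0$ along $\R f_{w_{\infty}}$, and since $\Pi$ is the preimage of $L_{\mathbf{v}}$ under $\pi_{w_{\infty}}$, the signed distance from $\pi_{w_{\infty}}(\ab(p_k(w_{\infty})))$ to $L_{\mathbf{v}}$ in $\Delta_0$ is a positive multiple of the distance from $\ab(p_k(w_{\infty}))$ to $\Pi$ in $\R^3$. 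By the previous step, this distance is unbounded in $k$, so the coordinate of $\pi_{w_{\infty}}(\ab(p_k(w_{\infty})))$ along $\mathbf{v}$ is unbounded. Taking the closure preserves unboundedness, so $\Rcal_{w_{\infty}}$ is unbounded in the direction $\mathbf{v}$. Since $\mathbf{v}$ was arbitrary, this concludes the proof.

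There is no real obstacle here: all the work has been done upstream. The only point that needs a touch of care is checking that the oblique projection $\pi_{w_{\infty}}$ identifies distance to the line $L_{\mathbf{v}}$ in $\Delta_0$ with distance to the plane $\Pi$ in $\R^3$ (up to a positive constant), which holds because the kernel $\R f_{w_{\infty}}$ of $\pi_{w_{\infty}}$ is contained in $\Pi$ and because $f_{w_{\infty}}\notin\Delta_0$ ensures the direct sum $\R f_{w_{\infty}}\oplus\Delta_0=\R^3$ is non-degenerate.
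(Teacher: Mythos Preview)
Your proof is correct and follows essentially the same approach as the paper: apply Proposition~\ref{prop:geom} to the word $w_{\infty}$ of Proposition~\ref{prop:w_star}, using planes of the form $\R f_{w_{\infty}} + L$ with $L$ a line of $\Delta_0$. You simply spell out in more detail the passage from unbounded distance to $\Pi$ in $\R^3$ to unbounded distance to $L_{\mathbf{v}}$ in $\Delta_0$, which the paper leaves implicit.
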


\begin{proof}
We obtain, by applying Proposition \ref{prop:geom} to the word $w_{\infty}$ described in Proposition \ref{prop:w_star} and to planes spanned by $f_w$ and a vector of $\Delta_0$, that the Rauzy fractal associated with $w_{\infty}$ cannot be trapped between two parallels lines.
\end{proof}

\section{Proof of Lemma \ref{lemma:families}}
\label{sect:proof}
	
	We consider the infinite oriented graph whose vertices are the elements of $\Z^3$ and whose edges map triplets to their images by one the 15 following applications. For $\delta \in \{-2,-1,0,1,2\}$ and $i \in \{1,2,3\}$, consider:
	
	$$\begin{array}{llllr}
	\tau_{i,\delta} : & \Z^3 & \rightarrow & \Z^3& \\
	& (x_j)_{j \in \{1,2,3\}} & \mapsto & (y_j)_{j \in \{1,2,3\}}  & \text{ where } y_i = x_1+x_2+x_3+\delta \text{ and } y_j = x_j \text{ for } j\neq i.
	\end{array}
	$$

	Our aim is to show that all vertices can be reached from the triplet $O=(0,0,0) \in \Z^3$, moving through a finite number of edges (see Definition \ref{def:accessible} and Proposition \ref{prop:tous} below.) The motivation lies in the following lemma. 
	
	\begin{lemma} \label{lemma:motiv}
		Let $d \in \Z^3$. If there exist $n \in \N$ and a finite sequence $(i_l,\delta_l)_{0\leq l\leq n-1} \in (\{1,2,3\}\times \{-2,-1,0,1,2\})^n$ such that $d = \tau_{i_{n-1},\delta_{n-1}} \circ ... \circ \tau_{i_0,\delta_0} (O)$, then there exist $s_1 \in AR^*$ and  $u, v  \in \Fcal(\sigma_{i_{n-1}}\circ ... \circ \sigma_{i_0}(s_1(1)))$ satisfying $\ab(u)-\ab(v)= d$.
	\end{lemma}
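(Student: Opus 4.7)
My plan is to prove the lemma by induction on the length $n$ of the sequence $(i_l,\delta_l)_{0\leq l\leq n-1}$. The base case $n=0$ is immediate: $d=O=(0,0,0)$, and for any $s_1 \in AR^*$ the choice $u=v=1 \in \Fcal(s_1(1))$ yields $\ab(u)-\ab(v)=d$.

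For the inductive step I write $d = \tau_{i_n,\delta_n}(d')$, where $d' := \tau_{i_{n-1},\delta_{n-1}}\circ\cdots\circ\tau_{i_0,\delta_0}(O)$. Applying the induction hypothesis to $d'$, I obtain $s_1' \in AR^*$ and $u',v' \in \Fcal(W')$, with $W' := \sigma_{i_{n-1}}\circ\cdots\circ\sigma_{i_0}(s_1'(1))$, satisfying $\ab(u')-\ab(v')=d'$. Passing to $\sigma_{i_n}(u'),\sigma_{i_n}(v') \in \Fcal(\sigma_{i_n}(W'))$, the compatibility $\ab(\sigma_i(w))=\ab(w)M_{\sigma_i}$ combined with the explicit form of $M_{\sigma_{i_n}}$ gives $\ab(\sigma_{i_n}(u'))-\ab(\sigma_{i_n}(v'))=d'M_{\sigma_{i_n}}=\tau_{i_n,0}(d')$, which differs from the target $d=\tau_{i_n,\delta_n}(d')$ only in the $i_n$-th coordinate, by the residual amount $\delta_n \in \{-2,-1,0,1,2\}$.

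The key step is to absorb this residual $\delta_n$ by modifying the two factors at their boundaries. Because every $\sigma_{i_n}$ sends each letter of $A$ to a word beginning with $i_n$, the factor $\sigma_{i_n}(u')$ always starts with $i_n$ and, provided $u'$ is not at the right end of $W'$, is immediately followed in $\sigma_{i_n}(W')$ by a second $i_n$ (coming from the initial letter of the image of the letter of $W'$ just after $u'$); the same holds for $\sigma_{i_n}(v')$. Hence, for each of the two factors I can either drop its leading $i_n$ or append the following $i_n$, each operation changing its $i_n$-count by $\pm 1$. Combining at most one such adjustment per factor realises any residual in $\{-2,-1,0,1,2\}$, producing the required $u,v$.

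The main obstacle I anticipate is guaranteeing the availability of the appending move, i.e.\ that $u'$ and $v'$ do not sit at the right end of $W'$. I would handle this by strengthening the induction hypothesis to require that $u'$ and $v'$ lie strictly inside $W'$, and by choosing $s_1$ to be a suitable enlargement of $s_1'$ (obtained by composing with extra Arnoux-Rauzy substitutions) that preserves this interior condition for the newly constructed factors $u,v$, so that the induction closes.
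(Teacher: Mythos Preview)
Your core mechanism is exactly the paper's: apply $\sigma_{i_l}$ to both factors, which realises $\tau_{i_l,0}$ on the abelianized difference, and then correct the $i_l$-coordinate by $\delta_l\in\{-2,-1,0,1,2\}$ via deleting the leading $i_l$ and/or appending the trailing $i_l$. The paper organises this as a direct iterative construction rather than an induction, but that is cosmetic.

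Where your sketch is loose is precisely the point you flag. The qualitative strengthening ``$u',v'$ lie strictly inside $W'$'' together with ``enlarge $s_1'$'' does not quite close the induction: enlarging $s_1'$ creates room \emph{to the right} of the factors, but it does nothing for the \emph{removal} move. If $|u'|=1$ and $u'=i_n$, then $\sigma_{i_n}(u')=i_n$ and a $\delta_n=-2$ step leaves $u$ empty; at the next step a further removal is impossible, and no enlargement of $s_1$ repairs this since it does not change $|u'|$. The paper avoids this by choosing $s_1$ \emph{once}, long enough that $|s_1(1)|\ge 2n$, and taking $u_0=v_0=p_n(s_1(1))$; it then maintains the quantitative invariants ``$|u_l|\ge n-l$'' and ``$u_l$ has an occurrence followed by at least $n-l$ letters'', which are exactly what guarantee both the removal and the append are available at every step. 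If you want to keep your inductive packaging, the fix is to make the hypothesis quantitative (for every $L$ there exist $s_1$ and $u',v'$ with $|u'|,|v'|\ge L$ and at least $L$ letters to their right), and to invoke it with $L+1$ in the step; this is equivalent to the paper's argument.
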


	\begin{proof}
		Let $d \in \Z^3$. Assume that there exist $n \in \N$ and $(i_l,\delta_l)_{0\leq l\leq n-1} \in (\{1,2,3\}\times \{-2,-1,0,1,2\})^n$ such that $d = \tau_{i_{n-1},\delta_{n-1}} \circ ... \circ \tau_{i_0,\delta_0} (O)$. 
		We are going to build iteratively two finite sequences of finite words $(u_l)$ and $(v_l)$, where $l \in \{0,...,n\}$, and $s_1 \in AR^*$, such that for all $l$, the words $u_l$ and $v_l$ are factors of $\sigma_{i_{l-1}}\circ ... \circ \sigma_{i_0}(s_1(1))$, and such that  $\ab(u_n)-\ab(v_n)=d$.
		
		First, we choose $s_1 \in AR^*$ that satisfies $\vert s_1(1) \vert \geq 2n$, and we set $u_0 = v_0 = p_n(s_1(1))$ (prefix of length $n$ of $s_1(1)$). Then, assuming that $u_l$ and $v_l \in \Fcal(\sigma_{i_{l-1}}\circ ... \circ \sigma_{i_0}(s_1(1)))$ are built, we set $\tilde{u}_{l+1}= \sigma_{i_l}(u_l)$ and $\tilde{v}_{l+1}= \sigma_{i_l}(v_l)$. From  $\tilde{u}_{l+1}$ and $\tilde{v}_{l+1}$, we define $u_{l+1}$ and $v_{l+1}$ according to the following table.
		\medskip
		
		\begin{center}
			\begin{tabular}{|l|ll|}
				\hline
				$\delta$   & choice for $u_{l+1}$ & choice for $v_{l+1}$ \\
				\hline
				$0$  & $\tilde{u}_{l+1}$ & $\tilde{v}_{l+1}$\\
				$1$  & $\tilde{u}_{l+1}.i_l$ & $\tilde{v}_{l+1}$\\
				$2$  & $\tilde{u}_{l+1}.i_l$ & $v_{l+1} \text{ such that } i_l.v_{l+1}=\tilde{v}_{l+1} (*)$\\
				$-1$  & $\tilde{u}_{l+1}$ & $\tilde{v}_{l+1}.i_l$\\
				$-2$  & $u_{l+1} \text{ such that } i_l.u_{l+1}=\tilde{u}_{l+1} (*)$ & $\tilde{v}_{l+1}.i_l$\\
				\hline
		\end{tabular}\end{center}
		\medskip
		
		\noindent We now justify that the steps marked with $(*)$ (removal of the initial $i_l$) are well-defined, and that $u_{l+1}$ and $v_{l+1}$ are, in all cases, factors of $\sigma_{i_l}\circ ... \circ\sigma_{i_0}(s_1(1))$.
		
		Observe that for any step $l \in \{0,...,n-1\}$, we remove at most one letter from the left and add at most one letter to the right of $\tilde{u}_{l+1}$ (resp. $\tilde{v}_{l+1}$). The Arnoux-Rauzy substitutions being nonerasing, we recursively check that (these properties hold symmetrically for $v_l$)
		:
		
		- the length of $u_l$ and its image $\tilde{u}_{l+1}$ is at least $n-l$; so we can always perform step $(*)$;
		
		- there is an occurrence of $u_l$ which is followed by at least $n-l$ letters in $\sigma_{i_{l-1}}\circ...\circ \sigma_{i_0}(s_1(1))$; so its image $\tilde{u}_{l+1}$ has also an occurrence in $\sigma_{i_{l}}\circ...\circ \sigma_{i_0}(s_1(1))$ which is followed by at least $n-l$ letters, and whose first following letter is $i_l$.

		Finally, in all  cases, the words $u_{l+1}$ and $v_{l+1}$ are factors of  $\sigma_{i_l}\circ ... \circ\sigma_{i_0}(s_1(1))$ and satisfy $\ab(u_{l+1})-\ab(v_{l+1}) = \tau_{i_l,\delta_l} (\ab(u_l)-\ab(v_l))$. In particular, at step $l=n-1$, the finite words $u_n$ and $v_n$ are factors of $\sigma_{i_{n-1}}\circ ... \circ \sigma_{i_0}(s_1(1))$ and satisfy $\ab(u_n)-\ab(v_n)= \tau_{i_{n-1},\delta_{n-1}} \circ ... \circ \tau_{i_0,\delta_0} (O)=d$. 
	\end{proof}

		In the sequel, it is convenient to introduce some vocabulary from graph theory.
		
		\begin{definition}\label{def:accessible}
			A triplet $(a,b,c) \in \Z^3$ is \emph{accessible} from a triplet $(d,e,f)$ if there exist a nonnegative integer $n$ and a finite sequence $(i_l,\delta_l)_{0\leq l\leq n-1} \in (\{1,2,3\}\times \{-2,-1,0,1,2\})^n$ such that $(a,b,c) = \tau_{i_{n-1},\delta_{n-1}} \circ ... \circ \tau_{i_0,\delta_0} ((d,e,f))$.
		\end{definition}
		
		\begin{proposition} \label{prop:tous}All triplets in $\Z^3$ are accessible from $O$.
		\end{proposition}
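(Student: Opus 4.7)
My plan is to prove Proposition~\ref{prop:tous} in three layers, exploiting the transitivity of accessibility and the symmetry of the three maps $\tau_i$ under permutations of the coordinates.

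The first layer handles the coordinate axes. When the second and third coordinates vanish, one has $\tau_{1,\delta}(x, 0, 0) = (x + \delta, 0, 0)$, so iterating $\tau_1$ from $O$ reaches every $(a, 0, 0)$ in $\lceil |a|/2 \rceil$ steps. By symmetry the other two axes are also covered.

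The second layer handles the coordinate planes. Given a target $(a, b, 0)$, I first reach $(0, b, 0)$ by the previous step and then iterate $\tau_1$, which yields
\[
\tau_{1,\delta_k} \circ \cdots \circ \tau_{1,\delta_1}(0, b, 0) \;=\; \bigl(kb + \delta_1 + \cdots + \delta_k,\; b,\; 0\bigr),
\]
so every $a$ of the form $kb + D$ with $|D| \le 2k$ is reached. For $|b| \le 2$ the union of these ranges as $k$ grows covers $\Z$ directly. For $|b| \ge 3$ there are gaps in the first coordinate, which I fill by short excursions through states with a nonzero third coordinate: apply $\tau_3$ to raise $z$, then one or several $\tau_1$ steps (whose effect on $x_1$ now depends on $b + z$ and can be tuned by the choice of $z$), and finally $\tau_3$ to restore $z = 0$. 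The same strategy, with the indices permuted, covers the planes $\{x_1 = 0\}$ and $\{x_2 = 0\}$.

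The third layer lifts from $\{x_3 = 0\}$ to the full lattice: given arbitrary $(a, b, c)$, reach $(a, b, 0)$ via the second layer and then adjust the third coordinate to $c$ by iterated $\tau_3$ moves, using analogous excursions through nonzero second coordinate when needed.

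The main obstacle is the excursion construction in the second layer: one must check, for every $(a, b, 0)$ with $|b| \ge 3$, that the intermediate $\delta$'s can be chosen in $\{-2, -1, 0, 1, 2\}$ so that the detour yields the desired net change in $x_1$ while restoring $x_2 = b$ and $x_3 = 0$. I expect this to reduce to a finite case analysis on the signs of $a$ and $b$ together with the residue of $a - kb$ modulo a small quantity depending on $b$, after which the rest of the argument is a routine bookkeeping exercise.
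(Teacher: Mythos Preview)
Your plan has a real gap at the excursion step, and it is not just missing bookkeeping.

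A small point first: for $|b|=2$ the ranges do \emph{not} cover $\Z$. From $(0,2,0)$ each $\tau_{1,\delta}$ adds $2+\delta\in\{0,1,2,3,4\}$ to the first coordinate, so only nonnegative values are ever reached.

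The main issue is that for $b\ge 3$ the excursion you describe cannot work. From $(0,b,0)$, consider the region $R=\{(x,b,z):x\ge 0,\ z\ge 0\}$. For $(x,b,z)\in R$ one has
\[
\tau_{1,\delta}(x,b,z)=(x+b+z+\delta,\,b,\,z),\qquad \tau_{3,\delta}(x,b,z)=(x,\,b,\,x+b+z+\delta),
\]
and in both cases $x+b+z+\delta\ge 0+3+0-2=1>0$. Hence $R$ is forward-invariant under every $\tau_{1,\delta}$ and $\tau_{3,\delta}$: no sequence of such moves, in any order, leaves $R$. In particular you can never reach $(-1,3,0)$, nor return to $z=0$ with a negative first coordinate. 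The ``tuning of $b+z$'' you hope for is illusory: from the states you can reach, $z$ can only be made positive, which makes $\tau_1$ push $x$ even further to the right. Allowing $y$ to vary during the excursion destroys your layered structure and does not obviously help either; targets like $(-4,4,0)$ still resist any local fix of this kind.

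The paper proceeds very differently. It first records two symmetries of accessibility from $O$ (global sign change and coordinate permutations), and then proves by induction that the anchor points $(a,-a,-a)$ are accessible for all $a\ge 0$, via
\[
(a+1,-a-1,a+1)=\tau_{1,1}\circ(\tau_{3,2})^{2a+1}\circ\tau_{2,-1}\bigl((a,-a,-a)\bigr)
\]
together with a coordinate permutation. From the anchor, a general $(a,b,c)$ with $|b|,|c|\le a$ is reached by iterating $\tau_2$ and $\tau_3$, using that $\tau_{2,0}^{-1}$ and $\tau_{3,0}^{-1}$ strictly decrease the sup-norm distance to $(a,-a,-a)$, with the boundary cases $(a,b,-a)$, $(a,-a,c)$, $(a,c,c)$ handled directly. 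What makes this work is precisely that the anchor has coordinate sum $-a$, so that $\tau_2$ and $\tau_3$ act there as genuine translations; from your starting points with nonnegative coordinates the coupling always pushes the wrong way.
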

		
		The proof of Proposition \ref{prop:tous} lies on the two following lemmas.
		
		\begin{lemma} \label{lemma:symmetries}
			The triplet $(a,b,c) \in \Z^3$ is accessible from $O$ if and only if $(-a,-b,-c)$ is also accessible from $O$. Similarly, $(x_j)_{j \in \{1,2,3\}}$ is accessible from $O$ if and only if for all $s \in \mathfrak{S}_3$, where $\mathfrak{S}_3$ denotes the symmetric group acting on three elements, the triplet $(x_{s(j)})_{j \in \{1,2,3\}}$ is accessible from $O$.
		\end{lemma}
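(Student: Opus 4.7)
The plan is to show that the accessibility relation from $O$ is invariant under two symmetry groups acting on $\Z^3$: the multiplicative group $\{\pm 1\}$ and the symmetric group $\mathfrak{S}_3$ acting by coordinate permutation. Both groups fix $O = (0,0,0)$, and both conjugate the generators $\tau_{i,\delta}$ to other generators from the same family. Once these two ingredients are in place, the lemma follows by a straightforward induction on the length of an accessing sequence.

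For the sign symmetry, I would first check by direct computation the identity
$$-\tau_{i,\delta}(x) \;=\; \tau_{i,-\delta}(-x) \qquad \text{for all } x \in \Z^3,\ i \in \{1,2,3\},\ \delta \in \{-2,-1,0,1,2\}.$$
Indeed, both sides have $i$-th coordinate $-(x_1+x_2+x_3)-\delta$ and, for $j \neq i$, $j$-th coordinate $-x_j$. Using $-O = O$, an immediate induction then yields
$$-\bigl(\tau_{i_{n-1},\delta_{n-1}} \circ \cdots \circ \tau_{i_0,\delta_0}(O)\bigr) \;=\; \tau_{i_{n-1},-\delta_{n-1}} \circ \cdots \circ \tau_{i_0,-\delta_0}(O).$$
Hence $(-a,-b,-c)$ is accessible from $O$ whenever $(a,b,c)$ is, and the reverse implication follows by negating again.

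For the permutation symmetry, writing $P_s((x_j)_j) = (x_{s(j)})_j$, I would similarly check the conjugation relation
$$P_s\bigl(\tau_{i,\delta}(x)\bigr) \;=\; \tau_{s^{-1}(i),\delta}\bigl(P_s(x)\bigr).$$
This holds because $P_s$ preserves the sum $x_1+x_2+x_3$ (so the new special coordinate still has value $x_1+x_2+x_3+\delta$), and because $P_s$ sends the non-special coordinates of $\tau_{i,\delta}(x)$ to those of $\tau_{s^{-1}(i),\delta}(P_s(x))$. Since $P_s(O) = O$, induction once more gives that $P_s((a,b,c))$ is accessible from $O$ whenever $(a,b,c)$ is, using the sequence $(s^{-1}(i_l), \delta_l)_l$; the converse follows by applying the same argument to $s^{-1}$.

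No serious obstacle arises: the entire argument is a formal commutation calculation, relying only on the facts that $O$ is a fixed point of the $\{\pm 1\} \times \mathfrak{S}_3$-action on $\Z^3$ and that the parameter sets $\{1,2,3\}$ and $\{-2,-1,0,1,2\}$ for the generators $\tau_{i,\delta}$ are themselves stable under the induced action on indices. The only care needed is bookkeeping to ensure that the transformed sequence of $(i_l,\delta_l)$ still lies in the allowed range, which is clear since $-\{-2,-1,0,1,2\} = \{-2,-1,0,1,2\}$ and $s^{-1}(\{1,2,3\}) = \{1,2,3\}$.
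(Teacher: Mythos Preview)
Your proof is correct and follows essentially the same approach as the paper's, just made explicit: the paper simply says ``change $\delta_l$ into $-\delta_l$'' and ``change $i_l$ into $s(i_l)$'', which are precisely the conjugation identities you verified. The only cosmetic discrepancy is that, with your convention $P_s((x_j)_j)=(x_{s(j)})_j$, the correct relabeling is $i_l \mapsto s^{-1}(i_l)$ rather than $i_l \mapsto s(i_l)$; the paper's wording implicitly uses the opposite convention for the permutation action, but both amount to the same bijection on the set of generators.
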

		
		\begin{proof}
			For the first assertion, change $\delta_l$ into $-\delta_l$  in the finite sequence of edges going from $O$ to $(a,b,c)$. For the second assertion, change $i_l$ into $s(i_l)$ in the finite sequence of edges going from $O$ to $(x_j)_{j \in \{1,2,3\}}$.
		\end{proof}
		
		\begin{lemma} \label{lemma:anchor}
			Let $a \in \N$. The triplet $(a,-a,-a) \in \Z^3$ is accessible from $O$.
		\end{lemma}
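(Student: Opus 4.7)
The plan is to proceed by induction on $a$. The base case $a=0$ is immediate, since $(0,-0,-0)=O$ is trivially accessible from itself (empty path). For the inductive step, assuming $(a,-a,-a)$ is accessible from $O$, I would exhibit an explicit finite path of edges from $(a,-a,-a)$ to $(a+1,-(a+1),-(a+1))$; concatenating with the inductively guaranteed path from $O$ to $(a,-a,-a)$ then finishes the induction.

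The construction I have in mind consists of four phases, starting from $(a,-a,-a)$ (sum $-a$). First, apply $\tau_{2,2}$ a total of $a$ times: since this edge sets the second coordinate to $S+2$ while keeping the first and third fixed, one checks inductively that after $k$ applications we are at $(a,-a+2k,-a)$ with sum $-a+2k$; after $k=a$ applications this is $(a,a,-a)$ with sum $a$. Second, apply $\tau_{1,1}$ once; the sum being $a$, we get $y_1=a+1$, reaching $(a+1,a,-a)$ with sum $a+1$. Third, apply $\tau_{2,-2}$ a total of $2a+1$ times: from any state of the form $(a+1,x_2,-a)$ the sum equals $x_2+1$, so $\tau_{2,-2}$ produces $y_2=x_2-1$, decrementing $x_2$ by one at each step; after $2a+1$ applications we arrive at $(a+1,-a-1,-a)$ with sum $-a$. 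Fourth, apply $\tau_{3,-1}$ once; this gives $y_3=-a-1$, yielding $(a+1,-a-1,-a-1)$ as required.

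Every $\delta$ used lies in $\{-2,-1,1,2\}\subset\{-2,-1,0,1,2\}$, so all edges are legal, and the whole connecting path has length $3a+3$. Once the sequence is written down, the only verifications needed are direct arithmetic checks on the sum and on the coordinate being updated at each step. The only genuine obstacle is finding the construction itself; what makes it work is the observation that $\tau_{2,\pm 2}$ behaves as a unit increment (resp.\ decrement) of $x_2$ once the other two coordinates are placed in a suitably balanced configuration, which converts the problem of reaching the far-away point $(a+1,-a-1,-a-1)$ into a linear-in-$a$ concatenation of elementary moves.
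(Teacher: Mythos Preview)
Your argument is correct: each of the four phases is verified by the arithmetic you indicate, and the concatenated path of length $3a+3$ indeed takes $(a,-a,-a)$ to $(a+1,-a-1,-a-1)$, completing the induction.

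The route is close in spirit to the paper's but differs in one respect worth noting. The paper's inductive step applies $\tau_{1,1}\circ(\tau_{3,2})^{2a+1}\circ\tau_{2,-1}$ to $(a,-a,-a)$ to reach $(a+1,-a-1,a+1)$, and then invokes the symmetry lemma (sign change plus a coordinate permutation) to conclude that $(a+1,-a-1,-a-1)$ is accessible from $O$. Your construction instead produces an explicit path all the way to $(a+1,-a-1,-a-1)$, so it is self-contained and does not rely on the symmetry lemma at all. The price is a slightly longer connecting path ($3a+3$ edges versus $2a+3$ in the paper's explicit segment), which is irrelevant for the statement being proved. Both approaches exploit the same mechanism you identify: once two coordinates are held fixed with a convenient partial sum, repeated application of $\tau_{j,\pm 2}$ on the remaining coordinate acts as a unit shift.
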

		
		\begin{proof}The lemma is trivially true for $a=0$. By recurrence, consider an arbitrary nonnegative integer $a$ such that the triplet $(a,-a,-a)$ is accessible from $O$. One can check that $(a+1,-a-1,a+1) = \tau_{1,1}\circ (\tau_{3,2})^{2a+1}\circ\tau_{2,-1} ((a,-a,-a))$. So the triplet $(a+1, -a-1,a+1)$ is accessible from $O$. But then, Lemma \ref{lemma:symmetries} indicates that $(a+1,-a-1,-a-1)$ is accessible from $O$.
		\end{proof}

		\begin{proof}[Proof of Proposition \ref{prop:tous}]
			The proof relies on the four following observations.
			\begin{itemize}
				\item The vertices $(a,b,-a)$ and $(a,-a,c)$ are accessible from $O$ for all $a,b,c \in \Z$. Indeed, it suffices to write $(a,b,-a)=(\tau_{2,0})^{a+b}((a,-a,-a))$ and $(a,-a,c)=(\tau_{3,0})^{a+c}((a,-a,-a))$ and remember that $(a,-a,-a)$ is accessible from $O$ by Lemma \ref{lemma:anchor}.
				\item The vertex $(a,c,c) = \tau_{2,0}((a,-a,c))$ is also accessible from $O$,
				\item If $a\geq b > c > -a$, then $(a,-a+b-c,c)=(\tau_{2,0})^{-1}(a,b,c)$ is closer to $(a,-a,-a)$ in sup norm, and we have $\vert -a+b-c \vert, \vert c \vert <a$,
				\item If $a\geq c > b >-a$, then $(a,b,-a-b+c)=(\tau_{3,0})^{-1}((a,b,c))$ is closer to $(a,-a,-a)$ than $(a,b,c)$, and we have $\vert b \vert, \vert -a-b+c \vert < a.$
			\end{itemize} 
		Let $a,b,c \in \Z^3$. By Lemma \ref{lemma:symmetries}, it suffices to deal with the case $\vert b \vert, \vert c \vert \leq \vert a \vert$, and $a >0$. Following the observations above, we recursively construct a finite sequence $(i_l)_{0\leq l \leq n-1} \in \{2,3\}^n$ such that $(a,b,c)=\tau_{i_{n-1},0}\circ ... \circ \tau_{i_0,0}((a,-a,-a))$. Since $(a,-a,-a)$ is accessible from $O$ (Lemma \ref{lemma:anchor}), the vertex $(a,b,c)$ is also accessible from $O$.\end{proof}

		\begin{proof}[Proof of Lemma \ref{lemma:families}]
			Lemma \ref{lemma:families} follows from Proposition \ref{prop:tous}, Definition \ref{def:accessible} and Lemma \ref{lemma:motiv}.
		\end{proof}

\begin{remark}
	The graph $\Gcal$ is a simplification, exploiting the remarkable properties of the substitutions $\sigma_1,\sigma_2$ and $\sigma_3$, of the imbalance automaton, introduced in \cite{And20} for a much wider range of S-adic systems (ie class of words obtained from a set of substitutions through \emph{directive sequences}).
\end{remark}

%
%

\section{The vector of letter frequencies of $w_{\infty}$ has rationally independent entries} \label{sect:entries}

We sketch an elementary proof of the much wider result:

\begin{theorem}\label{th:main2}
	The vector of letter frequencies of any Arnoux-Rauzy word has rationally independent entries.
\end{theorem}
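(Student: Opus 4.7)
The plan is to proceed by contradiction: assume there exists $(a,b,c) \in \Z^3 \setminus \{(0,0,0)\}$ such that $a\,f_w(1) + b\,f_w(2) + c\,f_w(3) = 0$, and derive an impossibility by transporting this rational relation down the directive sequence $(s_n)_{n \in \N}$ of $w$. The hypothesis to exploit is that each of $\sigma_1, \sigma_2, \sigma_3$ appears infinitely often in $(s_n)$.

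First, I would introduce the shifted Arnoux-Rauzy word $w^{(n)}$ associated with the directive sequence $(s_{n+k})_{k \in \N}$, whose letter frequency vector $f_{w^{(n)}}$ also has strictly positive entries, and observe the classical relation $f_w \propto M_{s_0}^T \cdots M_{s_{n-1}}^T f_{w^{(n)}}$ (as column vectors). Because the matrices $M_{\sigma_i}$ belong to $GL_3(\Z)$, the integer row vector
\[
(a_n, b_n, c_n) \,:=\, (a,b,c)\, M_{s_0}^T\, M_{s_1}^T \cdots M_{s_{n-1}}^T
\]
is nonzero for every $n$ and satisfies the propagated relation $(a_n, b_n, c_n) \cdot f_{w^{(n)}} = 0$. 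Since $f_{w^{(n)}}$ lies in the interior of the positive simplex, the vector $(a_n, b_n, c_n)$ must contain at least one strictly positive and at least one strictly negative coordinate.

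The next step is a dichotomy on the update rule. Direct computation shows that right-multiplication by $M_{\sigma_i}^T$ fixes the $i$-th coordinate of a row vector and adds it to the two others. Consequently, if $s_n = \sigma_i$ and the $i$-th coordinate of $(a_n, b_n, c_n)$ is strictly positive, then either $(a_{n+1}, b_{n+1}, c_{n+1})$ has all coordinates non-negative, in which case $(a_{n+1}, b_{n+1}, c_{n+1}) \cdot f_{w^{(n+1)}} > 0$ contradicts the propagated relation, or else $\min(a_{n+1}, b_{n+1}, c_{n+1})$ is strictly greater than $\min(a_n, b_n, c_n)$. Symmetrically if that coordinate is strictly negative; and if it vanishes the vector is left unchanged.

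To conclude, by the sign-structure constraint $\min(a_n, b_n, c_n)$ is a non-positive integer, in fact $\leq -1$ at every step; since it can only strictly increase, this can happen only finitely often. Hence for $n$ large enough the applied coordinate $(a_n, b_n, c_n)_{s_n}$ is never strictly positive, and symmetrically never strictly negative, so from some $n_0$ on it is identically zero. From that moment the vector is left untouched at each update, so $(a_n, b_n, c_n) = (a_\ast, b_\ast, c_\ast)$ for all $n \geq n_0$; since each $\sigma_i$ still appears at some step $n \geq n_0$, the $i$-th coordinate of $(a_\ast, b_\ast, c_\ast)$ must vanish for every $i$, forcing $(a_\ast, b_\ast, c_\ast) = 0$, a contradiction with the invertibility of the matrix product. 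The main obstacle I anticipate is the careful case analysis underlying the dichotomy in the third paragraph, in particular handling sign patterns containing zeros and checking that the monotonicity of the minimum and maximum really is strict by at least one at each effective update.
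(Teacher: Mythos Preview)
Your setup coincides with the paper's: you propagate the integer relation along the directive sequence via $(a_{n+1},b_{n+1},c_{n+1})=(a_n,b_n,c_n)M_{s_n}^T$, which on coordinates is $(a,b,c)\mapsto(a,a+b,a+c)$ when $s_n=\sigma_1$, and this is exactly the recursion the paper writes for its column vectors $l_m=M_{s_{m-1}}\cdots M_{s_0}l_0$. Orthogonality to the positive vector $f_{w^{(n)}}$ forces mixed signs at every step, again as in the paper.

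The gap is in your concluding paragraph. You assert that $\min(a_n,b_n,c_n)$ ``can only strictly increase'' and hence, being an integer bounded above by $-1$, does so only finitely often; symmetrically for the maximum. But the minimum is \emph{not} monotone along the whole sequence: at a step where the applied coordinate is negative it can drop. For instance, with $s_n=\sigma_3$ the vector $(3,-2,-1)$ is sent to $(2,-3,-1)$, and the minimum falls from $-2$ to $-3$. Thus, knowing that the minimum strictly rises at every step with positive applied coordinate does not bound the number of such steps, because the minimum can be pushed back down at the interspersed negative steps. The obstacle you anticipated (checking strictness) is not the real one; the real one is that $\min$ and $\max$ taken separately are not monotone at all.

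The paper repairs this by tracking the single quantity $D_m=\max(l_m)-\min(l_m)$. A short case check, using that the extreme entries of $l_m$ have opposite signs, shows $D_{m+1}\le D_m$ at every step, and moreover $D_{m+2}<D_m$ whenever $s_{m+1}\ne s_m$; since consecutive directive letters differ infinitely often, this forces an infinite strict descent in $\N$, the desired contradiction. Replacing your separate $\min$/$\max$ bookkeeping by this spread argument closes the gap, and the remainder of your outline goes through unchanged.
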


The proof is inspired from a similar result that holds for C-adic words \cite{CLL17}. 
\begin{proof}
Let $w$ an Arnoux-Rauzy word; denote by $(s_n)_{n \in \N}$ its directive sequence and by $f$ its letter frequencies vector. We recall that for all nonnegative integer $n$, $s_n = \sigma_i$ if and only if the $i-th$ entry of $F_{AR}^n(f)$ ($F_{AR}$ is defined in Section \ref{sect:introduction}) is greater than the sum of the two others.  By contradiction, assume that the entries of $f$ are not rationally independent.

 First, observe that if for some $r \in \N$, the $i-th$ entry of $F^r_{AR}(f)$ is zero, then it will remain zero; and from this point on the directive sequence will not contain the substitution $\sigma_i$, which is conflicting with the definition of Arnoux-Rauzy words and the uniqueness of the directive sequence. Thus, for all $n \in \N$, all entries of $F_{AR}^n(f)$ are positive. Let $l_0$ a nonzero integer column vector such that $fl_0 = 0$ (recall that $f$ is a line vector). Let $l_m = M_{s_{m-1}}...M_{s_0}l_0$. The Arnoux-Rauzy matrices being invertible, $l_m$ is also a nonzero integer column vector; it satisfies $F_{AR}^m(f)l_m = f.M^{-1}_{s_0}...M^{-1}_{s_{m-1}}l_m = fl_0 = 0$.  
 Denote $l_m = (a,b,c)^t$ and consider $D_m = \max(\vert b-a \vert, \vert c-b \vert, \vert c-a \vert) \in \N$ the difference between the maximum and the minimum entry of $l_m$, that we call \emph{spread} of $l_m$. We claim that the sequence of nonnegative integers $(D_m)_{m \in \N}$ is non-increasing and that it furthermore decreases infinitely often - and here will be the contradiction.
 
 Indeed, the vector $l_{m+1}$ is of the form $Ml_m$, where $M$ is one the the three Arnoux-Rauzy matrices $M_{\sigma_1}$, $M_{\sigma_2}$ or $M_{\sigma_3}$, which give respectively: $l_{m+1} = (a,a+b,a+c)^t$, $l_{m+1}=(a+b,b,c+b)^t$ and $l_{m+1}=(a+c,b+c,c)^t$. One can easily show, observing that the extreme entries of $l_m$ have opposite signs, that in all cases $D_{m}\geq D_{m+1}$. Similarly, we write $l_{m+2}= M_{s_{m+1}}M_{s_m}l_m$. A quick argument show that as soon as $s_{m+1}\neq s_m$, which happens infinitely many times by definition of Arnoux-Rauzy words, we have $D_m>D_{m+2}$.
\end{proof}

%
%

\bibliographystyle{abbrv}
\bibliography{bib_Rauzy}

\end{document}